\documentclass[12pt, a4paper,reqno]{amsart}
\usepackage[margin=1in]{geometry} 
\usepackage{amsfonts, amssymb,amsmath,amsthm}
\usepackage{graphicx}
\usepackage{subcaption}
\usepackage[dvipsnames]{xcolor}

\newtheorem{theorem}{Theorem}
\DeclareMathOperator*{\res}{Res}

\newtheorem{lemma}{Lemma}
\newtheorem{corollary}{Corollary}

\newtheorem*{remark*}{Remark}

\theoremstyle{definition}

\numberwithin{equation}{section}

\begin{document}

\title[The second discrete moment of the Riemann zeta function]{The discrete second moment of mixed derivatives of the Riemann zeta function}
\author[B. Durkan]{Benjamin Durkan}
\address{Department of Mathematics, The University of Manchester, Oxford Road, Manchester, M13 9PL}
\email{benjamin.durkan@postgrad.manchester.ac.uk}
\author[C. Hughes]{Christopher Hughes}
\address{Department of Mathematics, University of York, York, YO10 5GH, United Kingdom}
\email{christopher.hughes@york.ac.uk}
\author[A. Pearce-Crump]{Andrew Pearce-Crump}
\address{School of Mathematics, Fry Building, Woodland Road, Bristol, BS8 1UG, United Kingdom}
\email{andrew.pearce-crump@bristol.ac.uk}
\date{}

\begin{abstract}
We establish the full asymptotic for the discrete second moment of the Riemann zeta function of mixed derivatives evaluated at the zeta zeros, providing both unconditional and conditional error terms. This was first studied by Gonek, where only the leading order asymptotic was given, later extended by Conrey--Snaith and Milinovich to include the lower order terms for the first derivative. We extend the case of the first derivative to all derivatives.
\end{abstract}

\maketitle

\section{Introduction}
The central object of this paper is the discrete second moment of mixed derivatives of the Riemann zeta function, given by the sum
\begin{equation}\label{eq:main_eq}
    I(\mu,\nu):=I(\mu,\nu;T)=\sum_{0<\gamma\le T}\zeta^{(\mu)}(\rho)\zeta^{(\nu)}(1-\rho),
\end{equation}
as $T\to\infty$, where the sum is over non-trivial zeros of zeta $\rho=\beta+i\gamma$, and where $\zeta^{(\mu)}(s)$ denotes the $\mu^{\text{th}}$ derivative of the Riemann zeta function. We will establish a full asymptotic expansion for this sum, with a power-saving error term under the Riemann Hypothesis.

This type of discrete moment was introduced by Gonek \cite{Gonek1984}, who proved for positive integers $\mu,\nu$, a leading order asymptotic of the form \begin{multline}\label{eq:gonek_general}
    \sum_{0<\gamma\le T}\zeta^{(\mu)}(\rho)\zeta^{(\nu)}(1-\rho) \\ =
    (-1)^{\mu+\nu}\left(\frac{1}{\mu+\nu+1}-\frac{1}{(\mu+1)(\nu+1)}\right)\frac{T}{2\pi}\left(\log \frac{T}{2\pi}\right)^{\mu+\nu+2}+O\left(T(\log T)^{\mu+\nu+1}\right)
\end{multline}
as $T\to\infty$.

The special case $\mu=\nu$, called the discrete second moment of zeta, is given under the Riemann Hypothesis by
\begin{equation}\label{eq:gonek_muequalsnu}
    \sum_{0<\gamma\le T}\left|\zeta^{(\nu)}\left(\frac{1}{2}+i\gamma\right)\right|^2=\frac{\nu^2}{(2\nu+1)(\nu+1)^2}\frac{T}{2\pi}\left(\log \frac{T}{2\pi}
\right)^{2\nu+2}+O\left(T(\log T)^{2\nu+1}\right)
\end{equation}
as $T \rightarrow \infty$.

Conrey and Snaith~\cite{conrey2007applicationslfunctionsratiosconjectures} conjectured under the Riemann Hypothesis that for $\varepsilon > 0$ arbitrary and $L=\log t/2\pi$,
\begin{align*}
\sum_{0 < \gamma \leq T} |\zeta'(\rho)|^2 
= \frac{1}{2\pi}\int_1^T \Bigg(&
    \frac{1}{12} L^4
    + \frac{2\gamma_0}{3} L^3
    + \left( \gamma_0^2 - 2\gamma_1 \right) L^2 - \left( 2\gamma_0^3 + 2\gamma_0 \gamma_1 + \gamma_2 \right) L \\
    &+ \left( 2\gamma_0^4 + 2\gamma_0^2 \gamma_1 + 14\gamma_1^2 
    + 8\gamma_0 \gamma_2 + \frac{10\gamma_3}{3} \right)
\Bigg) \ dt + O\left(T^{1/2 + \varepsilon} \right),
\end{align*}
as $T \rightarrow \infty$, where the $\gamma_n$ are the Stieltjes coefficients from the expansion of $\zeta(s)$ around $s=1$,
\begin{equation*}
    \zeta(s)=\frac{1}{s-1}+\gamma_0-\gamma_1(s-1)+\frac{\gamma_2}{2!}(s-1)^2+\cdots+(-1)^n \frac{\gamma_n}{n!} (s-1)^n + \cdots.
\end{equation*}

Milinovich~\cite{micah} proved this conjecture under the assumption of the Riemann Hypothesis, writing the asymptotic as
    \begin{equation}\label{milinovich_theorem}
        \sum_{0<\gamma\le T}\left|\zeta'\left(\frac{1}{2}+i\gamma\right)\right|^2=\frac{T}{2\pi} P_4\left(\log\frac{T}{2\pi}\right)+O\left(T^{\frac{1}{2}+\varepsilon}\right)
    \end{equation}
as $T \rightarrow \infty$, where $P_4(x)$ is a degree four polynomial given by
    \begin{align*}
P_4(x)  &= \frac{1}{12} x^4 \\
      &+ \left( \frac{2\gamma_0 - 1}{3} \right) x^3 \\
      &+ \left( 1 - 2\gamma_0 + \gamma_0^2 - 2\gamma_1 \right) x^2 \\
      &+ \left( -2 + 4\gamma_0 - 2\gamma_0^2 - 2\gamma_0^3 - 10\gamma_0\gamma_1 + 4\gamma_1 - \gamma_2 \right) x \\
      &+ \left( \frac{6 + 6\gamma_0(5\gamma_1 + 4\gamma_2 - 2) + 6\gamma_0^2(\gamma_0 + \gamma_0^2 + 6\gamma_1 + 1) 
            - 12\gamma_1 + 42\gamma_1^2 + 3\gamma_2 + 10\gamma_3}{3} \right)
\end{align*}

\begin{remark*}
    The equivalence of Milinovich's result and the conjecture of Conrey and Snaith follows by performing the integral.
\end{remark*}

Some examples of other generalisations of the results discussed above are shifted second moment results \cite{garunkvstis2025mean, garunks_novikas}, higher moment conjectures \cite{gonek1989negative, Hejhal1989, hughes2000, ng2004}, extreme values of derivatives of zeta \cite{ng2008}, upper and lower bounds \cite{kirila2020, benli2023} on moments, and negative moments \cite{gonek1989negative, garaev2003, heap20222, gao2023, bui2024}.

\section{Statement of Results}

We generalise the results of both \eqref{eq:gonek_general} and \eqref{milinovich_theorem} by proving a full asymptotic expansion for the discrete second moment of mixed derivatives of the Riemann zeta function \eqref{eq:main_eq}, both unconditionally and conditionally under the Riemann Hypothesis.

\begin{theorem}\label{mainthm}
    For positive integers $\mu,\nu$, we have 
    \begin{equation}\label{mainthmeq}
        \sum_{0<\gamma\le T}\zeta^{(\mu)}(\rho)\zeta^{(\nu)}(1-\rho)=\frac{T}{2\pi}\mathcal{P}_{\mu,\nu}\left(\log\frac{T}{2\pi}\right)+O\left(Te^{-C\sqrt{\log T}}\right)
    \end{equation}
    as $T\to\infty$, where $C$ is a positive constant and where $\mathcal{P}_{\mu,\nu}(x)$ is the polynomial of degree $\mu+\nu+2$ given by
    \begin{multline}\label{eq:poly}
        \mathcal{P}_{\mu,\nu}(x) = \sum_{m=0}^{\mu+\nu+2} \sum_{k=0}^{\nu}  (-1)^{\nu}  \binom{\nu}{k} C_1^{(\mu,\nu)}(m,k)x^m\\
        +\sum_{m=0}^{\mu+\nu+2} \sum_{k=0}^{\mu}  (-1)^{\mu}  \binom{\mu}{k} \left(C_1^{(\nu,\mu)}(m,k)+C_2^{(\mu,\nu)}(m,k)\right)x^m,
    \end{multline}
    where
\begin{multline*}
C_1^{(\mu,\nu)}(m,k) = \\
\begin{cases} 
\displaystyle \frac{(-1)^m(\nu-k)}{m!} \sum_{j=0}^{\mu+\nu+1-m}  (-1)^{\mu+\nu-j} \frac{ (\mu+\nu+1-j)! }{(\mu+k+2-j)!} c^{(\mu,k)}_{j} +\\
\displaystyle \qquad + \frac{c^{(\mu,k)}_{\mu+\nu+2-m}}{(k+m-\nu)!} &  m \geq \nu-k \\[3ex]
\displaystyle \frac{(-1)^m(\nu-k)}{m!} \sum_{j=0}^{\mu+k+2}  (-1)^{\mu+\nu-j} \frac{ (\mu+\nu+1-j)! }{(\mu+k+2-j)!} c^{(\mu,k)}_{j}  & m \leq \nu -k-1
\end{cases}
\end{multline*}
where $c^{(\mu,k)}_{j}$ are the Laurent series coefficients around $s=1$ of \begin{equation}
    \frac{\zeta'}{\zeta}(s)\zeta^{(\mu)}(s)\zeta^{(k)}(s)\frac{1}{s}=\sum_{j=0}^{\infty}c^{(\mu,k)}_{j}(s-1)^{-\mu-k-3+j} ,
\end{equation}
    and  where
\begin{multline*}
    C_2^{(\mu,\nu)}(m,k) = \\
\begin{cases} 
\displaystyle  \frac{(-1)^m(\mu-k+1)}{m!} \sum_{j=0}^{\mu+\nu+1-m} (-1)^{\mu+\nu-j}  \frac{(\mu+\nu+1-j)!}{(\nu+k+1-j)!} d_{j}^{(\nu,k)} +  \\
\displaystyle \qquad  +\frac{d_{\mu+\nu+2-m}^{(\nu,k)}}{(k+m-\mu-1)!} & m \geq \mu-k+1 \\[3ex]
\displaystyle  \frac{(-1)^m(\mu-k+1)}{m!} \sum_{j=0}^{\nu+k+1} (-1)^{\mu+\nu-j}  \frac{(\mu+\nu+1-j)!}{(\nu+k+1-j)!} d_{j}^{(\nu,k)}  & m \leq \mu-k
\end{cases}
\end{multline*}
where $d_{j}^{(\nu,k)}$ are the Laurent series coefficients around $s=1$ of 
    \begin{equation*}
        \zeta^{(\nu)}(s)\zeta^{(k)}(s)\frac{1}{s}=\sum_{j=0}^{\infty}d_{j}^{(\nu,k)}(s-1)^{-\nu-k-2+j}.
    \end{equation*}

If one assumes the Riemann Hypothesis, the error term may be replaced with $O\left(T^{\frac{1}{2}+\varepsilon}\right)$ for arbitrary $\varepsilon>0$.
\end{theorem}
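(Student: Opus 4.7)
The plan is to evaluate $I(\mu,\nu)$ by the contour integral method of Gonek, refined so as to track every lower-order term. Set $a = 1 + 1/\log T$ and choose $T$ so that $|T-\gamma| \gg 1/\log T$ for every zeta ordinate $\gamma$ (this assumption on $T$ is removed at the end by a standard Landau-type argument). By the argument principle,
\[
I(\mu,\nu) = \frac{1}{2\pi i}\oint_{\mathcal{C}} \frac{\zeta'}{\zeta}(s)\zeta^{(\mu)}(s)\zeta^{(\nu)}(1-s)\,ds \;-\; \res_{s=1}\frac{\zeta'}{\zeta}(s)\zeta^{(\mu)}(s)\zeta^{(\nu)}(1-s),
\]
where $\mathcal{C}$ is the positively oriented rectangle with vertices $a+i,a+iT,1-a+iT,1-a+i$. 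The horizontal sides contribute $O(1)$ (bottom) and a negligible error (top), using the genericity assumption on $T$ together with standard pointwise bounds on $\zeta'/\zeta$, $\zeta^{(\mu)}$, and $\chi$ at height $T$.

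The two vertical sides are treated via the functional equation $\zeta(1-s)=\chi(1-s)\zeta(s)$. On the right segment $J_1$, $\nu$-fold differentiation gives
\[
\zeta^{(\nu)}(1-s) = (-1)^\nu \sum_{k=0}^\nu \binom{\nu}{k}\bigl[\chi(1-s)\bigr]^{(\nu-k)}\zeta^{(k)}(s),
\]
so that $J_1$ becomes a sum over $k$ of integrals of $\frac{\zeta'}{\zeta}(s)\zeta^{(\mu)}(s)\zeta^{(k)}(s)\cdot[\chi(1-s)]^{(\nu-k)}$ on $\Re(s)=a$; this produces the $C_1^{(\mu,\nu)}$ contribution to \eqref{eq:poly}. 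On the left segment $J_3$, the change of variable $s\mapsto 1-s$ brings integration back to $\Re(s)=a$, and combining the analogous formula for $\zeta^{(\mu)}(1-s)$ with the identity $\frac{\zeta'}{\zeta}(1-s)=\frac{\chi'}{\chi}(1-s)-\frac{\zeta'}{\zeta}(s)$ yields two more families: a $\zeta'/\zeta$-type family mirroring $J_1$ but with $\mu$ and $\nu$ interchanged (producing the $C_1^{(\nu,\mu)}$ summand), and a family in which $\frac{\chi'}{\chi}(1-s)$ replaces $\zeta'/\zeta$ and so contains no $\zeta'/\zeta$ factor at all (producing the $C_2^{(\mu,\nu)}$ summand).

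Each of these integrals on $\Re(s)=a$ is then shifted past $s=1$ to a line $\Re(s)=\sigma_0$ to the left of $1$. Using the Stirling asymptotic
\[
\chi(1-s) = \left(\tfrac{t}{2\pi}\right)^{s-1/2} e^{i(t+\pi/4)}\bigl(1 + O(1/t)\bigr),
\]
and its counterparts for higher derivatives (each differentiation of $\chi(1-s)$ brings down a factor $\log(t/2\pi)$ at leading order, modulo Stirling corrections), the $t$-integration from $1$ to $T$ of such an integrand yields a main contribution of the form $\frac{T}{2\pi}\cdot\res_{s=1}\!\bigl[F(s)(T/2\pi)^{s-1}/s\bigr]$, where $F$ is one of $\frac{\zeta'}{\zeta}(s)\zeta^{(\mu)}(s)\zeta^{(k)}(s)$ or $\zeta^{(\nu)}(s)\zeta^{(k)}(s)$. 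The $1/s$ factor here is precisely the one appearing in the definitions of $c^{(\mu,k)}_j$ and $d^{(\nu,k)}_j$. The remainder integral on $\Re(s)=\sigma_0$ is $O(T^{1/2+\varepsilon})$ under RH (taking $\sigma_0=1/2+\varepsilon$ and using standard convexity bounds for $\zeta^{(\mu)}$), and $O(Te^{-C\sqrt{\log T}})$ unconditionally (taking $\sigma_0$ inside the Vinogradov--Korobov zero-free region).

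The main obstacle is the explicit evaluation of these residues and their combinatorial packaging into \eqref{eq:poly}. Expanding $(T/2\pi)^{s-1}/s$ about $s=1$ as $\sum_{j\ge 0}\alpha_j(s-1)^j$ and multiplying by the Laurent series of $F(s)$ of order $\mu+k+3$ or $\nu+k+2$, the coefficient of $(\log(T/2\pi))^m$ becomes a finite sum over the Laurent coefficients $c^{(\mu,k)}_j$ or $d^{(\nu,k)}_j$. The piecewise definitions of $C_1^{(\mu,\nu)}(m,k)$ and $C_2^{(\mu,\nu)}(m,k)$ in the theorem reflect whether the relevant power of $(s-1)$ falls on the principal part of the Laurent expansion (producing the factorial-ratio sum) or on the analytic part (producing the isolated $c^{(\mu,k)}_{\mu+\nu+2-m}/(k+m-\nu)!$ term). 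Organising the multi-index sums arising from the Leibniz expansions of $[\chi(1-s)]^{(\nu-k)}$, carrying through the binomial factors supplied by the functional equation, and verifying that the $C_1^{(\mu,\nu)}$, $C_1^{(\nu,\mu)}$, and $C_2^{(\mu,\nu)}$ contributions sum exactly to $\mathcal{P}_{\mu,\nu}$ as stated is where the bulk of the technical work lies.
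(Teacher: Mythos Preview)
Your overall architecture matches the paper: a rectangular contour, the functional-equation expansion of $\zeta^{(\nu)}(1-s)$ on the right edge, and of $\zeta^{(\mu)}(1-s)$ together with $\frac{\zeta'}{\zeta}(1-s)=\frac{\chi'}{\chi}(1-s)-\frac{\zeta'}{\zeta}(s)$ on the left, producing exactly the three families that feed $C_1^{(\mu,\nu)}$, $C_1^{(\nu,\mu)}$, $C_2^{(\mu,\nu)}$. The gap is in how you extract the main term from each such integral. You propose to shift the vertical integral from $\Re(s)=a$ to $\Re(s)=\sigma_0<1$, collect a residue at $s=1$, and bound the remainder on $\Re(s)=\sigma_0$. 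But $s=1$ has height $0$ and lies \emph{below} your rectangle (your own contour has $1\le t\le T$), so no residue is collected by that shift; and the remaining integral is not small either, since $|\chi(1-\sigma_0-it)|\asymp t^{\sigma_0-1/2}$, so even under RH with $\sigma_0=\tfrac12+\varepsilon$ and Lindel\"of bounds on the $\zeta$-factors the integrand is only $\ll t^{\varepsilon}$ and the $t$-integral is of size $T^{1+\varepsilon}$, not $T^{1/2+\varepsilon}$. The Perron kernel $(T/2\pi)^{s-1}/s$ in your claimed main term has no source in a bare contour shift of the $\chi$-integral; it has to come from somewhere else.

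What is actually needed is to first exploit the oscillation of $\chi(1-s)$ via Gonek's stationary-phase lemma, which collapses
\[
\frac{1}{2\pi}\int_1^T\chi(1-c-it)\Bigl(\log\tfrac{t}{2\pi}\Bigr)^{\nu-k}\sum_n \frac{A_n^{(\mu,k)}}{n^{c+it}}\,dt
\]
to the finite sum $\sum_{n\le T/2\pi}A_n^{(\mu,k)}(\log n)^{\nu-k}+O(T^{1/2+\varepsilon})$. Only then does Perron's formula enter, applied to $\sum_{n\le Y}A_n^{(\mu,k)}$ \emph{without} the logarithmic weight; this is where the kernel $Y^s/s$ and the residue $\res_{s=1}[F(s)Y^s/s]$ arise, and this Perron contour (which does run through height $0$) is the one that gets shifted into the zero-free region to produce the unconditional or conditional error. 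The $(\log n)^{\nu-k}$ weight is then reinserted by partial summation. The piecewise form of $C_1$ and $C_2$ records the split between the boundary term and the integral term of that partial summation (the isolated $c^{(\mu,k)}_{\mu+\nu+2-m}/(k+m-\nu)!$ piece appears only for $m\ge\nu-k$ because that is exactly the range of $m$ hit by the boundary term), not a principal-part/analytic-part dichotomy of a single Laurent expansion as you describe.
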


We now state several corollaries of this result, with Corollary \ref{cor:higher_derivs_milinovich} clearly following immediately from this theorem and with Corollaries \ref{cor:micah} and \ref{corollary:general} proved in Section \ref{conclusion}.

\begin{corollary}\label{cor:higher_derivs_milinovich}
    Assume the Riemann Hypothesis. For $\nu$ a positive integer, the discrete second moment of zeta for all derivatives is given by
    \begin{multline*}
        \sum_{0<\gamma\le T}\left|\zeta^{(\nu)}\left(\frac{1}{2}+i\gamma\right)\right|^2
        \\=\frac{T}{2\pi} \sum_{m=0}^{2\nu+2} \left( \sum_{k=0}^{\nu} (-1)^{\nu} \binom{\nu}{k}  \left(2C_1^{(\nu,\nu)}(m,k)+C_2^{(\nu,\nu)}(m,k)\right) \right) \left(\log\frac{T}{2\pi}\right)^m + O\left(T^{\frac{1}{2}+\varepsilon}\right),
    \end{multline*}
    where $C_1^{(\nu,\nu)}(m,k)$ and $C_2^{(\nu,\nu)}(m,k)$ are defined in Theorem \ref{mainthm}.

\end{corollary}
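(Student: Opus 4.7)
The plan is to obtain Corollary \ref{cor:higher_derivs_milinovich} as a direct specialization of Theorem \ref{mainthm} to the case $\mu=\nu$, using the Riemann Hypothesis to identify the mixed product on the critical line with a squared modulus. There is no substantive analytic obstacle here; the work is purely algebraic rearrangement of the polynomial $\mathcal{P}_{\nu,\nu}$.

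First I would observe that on RH each nontrivial zero satisfies $\rho = \tfrac12 + i\gamma$, so that $1-\rho = \tfrac12 - i\gamma = \overline{\rho}$. Because the Dirichlet coefficients of $\zeta(s)$ are real, the reflection principle yields $\zeta^{(\nu)}(\overline{s}) = \overline{\zeta^{(\nu)}(s)}$ for every positive integer $\nu$. Consequently
\begin{equation*}
    \zeta^{(\nu)}(\rho)\,\zeta^{(\nu)}(1-\rho) \;=\; \zeta^{(\nu)}(\rho)\,\overline{\zeta^{(\nu)}(\rho)} \;=\; \left|\zeta^{(\nu)}\!\left(\tfrac12+i\gamma\right)\right|^{2},
\end{equation*}
so that the left-hand side of the corollary coincides with $I(\nu,\nu;T)$.

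Next I would invoke Theorem \ref{mainthm} with $\mu=\nu$. The conditional error term $O(T^{1/2+\varepsilon})$ is delivered directly. It remains only to simplify the main-term polynomial
\begin{equation*}
    \mathcal{P}_{\nu,\nu}(x) \;=\; \sum_{m=0}^{2\nu+2}\sum_{k=0}^{\nu}(-1)^{\nu}\binom{\nu}{k} C_1^{(\nu,\nu)}(m,k)\,x^m \;+\; \sum_{m=0}^{2\nu+2}\sum_{k=0}^{\nu}(-1)^{\nu}\binom{\nu}{k}\left(C_1^{(\nu,\nu)}(m,k)+C_2^{(\nu,\nu)}(m,k)\right)x^m.
\end{equation*}
Since both outer sums now run over the same index set $k=0,\dots,\nu$ with identical weight $(-1)^{\nu}\binom{\nu}{k}$, combining them merges the two $C_1^{(\nu,\nu)}$ contributions into a single $2C_1^{(\nu,\nu)}(m,k)$, so that
\begin{equation*}
    \mathcal{P}_{\nu,\nu}(x) \;=\; \sum_{m=0}^{2\nu+2}\left(\sum_{k=0}^{\nu}(-1)^{\nu}\binom{\nu}{k}\left(2C_1^{(\nu,\nu)}(m,k)+C_2^{(\nu,\nu)}(m,k)\right)\right)x^m,
\end{equation*}
which, after setting $x=\log(T/2\pi)$, is precisely the main term claimed in the corollary. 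The only place one needs to be slightly careful is verifying that the index collapse above is legitimate; this amounts to checking that the $C_j^{(\mu,\nu)}(m,k)$ were defined without any built-in asymmetry between $\mu$ and $\nu$ beyond what is displayed, which is immediate from their explicit formulas in Theorem \ref{mainthm}.
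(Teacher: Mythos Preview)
Your proposal is correct and matches the paper's approach: the paper simply states that Corollary~\ref{cor:higher_derivs_milinovich} follows immediately from Theorem~\ref{mainthm}, and you have spelled out exactly that immediate deduction (the RH identification $\zeta^{(\nu)}(\rho)\zeta^{(\nu)}(1-\rho)=|\zeta^{(\nu)}(\tfrac12+i\gamma)|^2$ together with the $\mu=\nu$ specialization that collapses the two $k$-sums into one).
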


\begin{corollary} \label{cor:micah}
    Assume the Riemann Hypothesis. In the case $\mu=\nu=1$, \eqref{mainthmeq} recovers the polynomial $P_4(x)$ in \eqref{milinovich_theorem}.
\end{corollary}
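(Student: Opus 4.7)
The plan is to specialise the general formula \eqref{eq:poly} to $\mu=\nu=1$ and verify coefficient-by-coefficient that the resulting degree-$4$ polynomial in $x=\log(T/2\pi)$ agrees with Milinovich's polynomial $P_4(x)$. When $\mu=\nu=1$ we have $(-1)^{\mu}=(-1)^{\nu}=-1$, $\binom{1}{k}=1$ for $k=0,1$, and the two $C_1$ sums in \eqref{eq:poly} collapse onto the same object, so the formula reduces to
\begin{equation*}
\mathcal{P}_{1,1}(x) \;=\; -\sum_{m=0}^{4}\sum_{k=0}^{1}\bigl(2\,C_1^{(1,1)}(m,k)+C_2^{(1,1)}(m,k)\bigr)\,x^{m}.
\end{equation*}
The proof therefore becomes a purely computational verification, namely extracting enough Laurent coefficients at $s=1$ and assembling them into the five coefficients of $x^{0},\ldots,x^{4}$.

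The first step is to write out, to sufficient order, the Laurent expansions at $s=1$ of the four auxiliary functions
\begin{equation*}
\frac{\zeta'}{\zeta}(s)\,\zeta'(s)\,\zeta(s)\,\frac{1}{s}, \qquad \frac{\zeta'}{\zeta}(s)\,\zeta'(s)^{2}\,\frac{1}{s}, \qquad \zeta'(s)\,\zeta(s)\,\frac{1}{s}, \qquad \frac{\zeta'(s)^{2}}{s},
\end{equation*}
which furnish the coefficients $c_j^{(1,k)}$ and $d_j^{(1,k)}$ for $k=0,1$. These are obtained by formally multiplying the known Laurent expansions of $\zeta(s)$ and $\zeta'(s)$ at $s=1$ in terms of the Stieltjes constants $\gamma_0,\gamma_1,\gamma_2,\gamma_3$, the expansion $1/s = 1-(s-1)+(s-1)^2-\cdots$, and the expansion of $\zeta'/\zeta$ (with simple pole of residue $-1$) derived from the first two by series inversion. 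One needs $c_j^{(1,k)}$ and $d_j^{(1,k)}$ at least up to $j=4$ in order to capture every term appearing in some $C_i^{(1,1)}(m,k)$.

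The second step is to substitute these coefficients into the piecewise definitions of $C_1^{(1,1)}(m,k)$ and $C_2^{(1,1)}(m,k)$ given in Theorem \ref{mainthm}, determining for each pair $(m,k)$ whether the upper branch ($m\geq\nu-k=1-k$ for $C_1$, and $m\geq\mu-k+1=2-k$ for $C_2$) applies, so that the isolated boundary term $c^{(1,k)}_{4-m}/(k+m-1)!$ or $d^{(1,k)}_{4-m}/(k+m-2)!$ is correctly included. Summing the contributions over $k\in\{0,1\}$ and multiplying by $-1$ yields $\mathcal{P}_{1,1}(x)$; its coefficient of $x^{m}$ is then compared directly with the corresponding coefficient of $P_4(x)$ for each $m=0,1,2,3,4$.

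The main obstacle is bookkeeping rather than ideas: the relevant products carry Laurent poles of order up to three at $s=1$, so one must retain many terms in each factor and keep careful track of signs, factorials and branch conditions, with errors in a single $c_j^{(1,k)}$ or $d_j^{(1,k)}$ propagating through every subsequent coefficient. For this reason it is prudent to verify the computation with a computer algebra system. Once the Laurent coefficients are correctly determined, the matching of $\mathcal{P}_{1,1}(x)$ with $P_4(x)$ is a mechanical exercise in simplification.
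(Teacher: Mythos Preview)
Your proposal is correct and follows essentially the same approach as the paper: specialise \eqref{eq:poly} to $\mu=\nu=1$, extract the Laurent coefficients $c_j^{(1,k)}$ and $d_j^{(1,k)}$ of the four auxiliary functions, feed them into the piecewise formulae for $C_1^{(1,1)}(m,k)$ and $C_2^{(1,1)}(m,k)$, and compare with $P_4(x)$. (One small slip: the relevant products have poles of order up to five at $s=1$, not three, since $\zeta'$ contributes a double pole; this only affects how many terms you must retain, not the method.)
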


\begin{corollary}\label{corollary:general}
    For $\mu,\nu$ positive integers, the leading order coefficient of $\mathcal{P}_{\mu,\nu}(x)$ agrees with the leading order given in \eqref{eq:gonek_general}.
\end{corollary}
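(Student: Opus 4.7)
The plan is to extract the coefficient of $x^{\mu+\nu+2}$ from $\mathcal{P}_{\mu,\nu}(x)$ and verify it equals $(-1)^{\mu+\nu}\bigl(\tfrac{1}{\mu+\nu+1}-\tfrac{1}{(\mu+1)(\nu+1)}\bigr)$. Setting $m=\mu+\nu+2$ in the piecewise definitions, note that $\mu+\nu+2\ge\nu-k$ and $\mu+\nu+2\ge\mu-k+1$ for every $k\ge 0$, so we are in the top branches of both $C_1$ and $C_2$. Crucially, the sums $\sum_{j=0}^{\mu+\nu+1-m}$ collapse to $\sum_{j=0}^{-1}$, which is empty, so only the boundary terms survive:
\[
C_1^{(\mu,\nu)}(\mu+\nu+2,k)=\frac{c_0^{(\mu,k)}}{(\mu+k+2)!},\qquad C_2^{(\mu,\nu)}(\mu+\nu+2,k)=\frac{d_0^{(\nu,k)}}{(\nu+k+1)!},
\]
and symmetrically $C_1^{(\nu,\mu)}(\mu+\nu+2,k)=c_0^{(\nu,k)}/(\nu+k+2)!$.

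Next I would compute the leading Laurent coefficients explicitly. Using $\zeta^{(n)}(s)\sim(-1)^n n!\,(s-1)^{-n-1}$ and $\zeta'(s)/\zeta(s)\sim -(s-1)^{-1}$ at $s=1$, a direct read-off from the defining Laurent expansions gives $c_0^{(\mu,k)}=(-1)^{\mu+k+1}\mu!\,k!$ and $d_0^{(\nu,k)}=(-1)^{\nu+k}\nu!\,k!$. Substituting these into the $C_1^{(\mu,\nu)}$ contribution leaves the sum
\[
(-1)^{\mu+\nu+1}\mu!\sum_{k=0}^{\nu}(-1)^k\binom{\nu}{k}\frac{k!}{(k+\mu+2)!}.
\]
The inner sum is evaluated via the Beta identity $\tfrac{k!}{(k+\mu+2)!}=\tfrac{1}{(\mu+1)!}\int_0^1 t^k(1-t)^{\mu+1}\,dt$, after which $\sum_{k=0}^{\nu}(-1)^k\binom{\nu}{k}t^k=(1-t)^{\nu}$ collapses the integrand to $(1-t)^{\mu+\nu+1}$, giving $\tfrac{1}{(\mu+1)!(\mu+\nu+2)}$. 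Hence the $C_1^{(\mu,\nu)}$ piece contributes $\tfrac{(-1)^{\mu+\nu+1}}{(\mu+1)(\mu+\nu+2)}$.

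By the same Beta-integral manipulation (exchanging $\mu\leftrightarrow\nu$), the $C_1^{(\nu,\mu)}$ piece contributes $\tfrac{(-1)^{\mu+\nu+1}}{(\nu+1)(\mu+\nu+2)}$, and a parallel computation for $C_2^{(\mu,\nu)}$ yields $\tfrac{(-1)^{\mu+\nu}}{\mu+\nu+1}$ after applying $\tfrac{k!}{(k+\nu+1)!}=\tfrac{1}{\nu!}\int_0^1 t^k(1-t)^{\nu}\,dt$. Summing the three contributions and using $\tfrac{1}{\mu+1}+\tfrac{1}{\nu+1}=\tfrac{\mu+\nu+2}{(\mu+1)(\nu+1)}$ to combine the two $(\mu+\nu+2)^{-1}$ terms produces precisely $(-1)^{\mu+\nu}\bigl(\tfrac{1}{\mu+\nu+1}-\tfrac{1}{(\mu+1)(\nu+1)}\bigr)$, matching Gonek.

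The only step requiring genuine care is the first: verifying that at $m=\mu+\nu+2$ the finite Laurent sums degenerate to a single boundary term and that the piecewise conditions are met uniformly in $k$. Once this bookkeeping is settled, the remainder is a direct Beta-function computation; no analytic input beyond the leading Laurent data of $\zeta^{(n)}$ and $\zeta'/\zeta$ at $s=1$ is needed.
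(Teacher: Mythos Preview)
Your proof is correct and follows essentially the same approach as the paper: both identify that at $m=\mu+\nu+2$ the $j$-sums are empty so only the boundary terms $c_0^{(\mu,k)}/(k+\mu+2)!$, $c_0^{(\nu,k)}/(k+\nu+2)!$, $d_0^{(\nu,k)}/(k+\nu+1)!$ survive, compute these leading Laurent coefficients from the singular behaviour of $\zeta^{(n)}$ and $\zeta'/\zeta$ at $s=1$, and substitute. The paper then simply writes ``evaluating these sums completes the proof'', whereas you go further and carry out that evaluation explicitly via the Beta-integral identity $k!/(k+r)! = \tfrac{1}{(r-1)!}\int_0^1 t^k(1-t)^{r-1}\,dt$, which collapses the alternating binomial sums neatly; this is a nice touch and makes the argument fully self-contained.
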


As we see in Milinovich's polynomial $P_4(x)$ in \eqref{milinovich_theorem}, the coefficients quickly become unwieldy. We give an example of our result for discrete second moment of the second derivative in Appendix~\ref{app:2ndDeriv} by explicitly writing out the polynomial for the second derivative. Finally, in Appendix~\ref{app:graphs} we demonstrate the theorem by plotting the graphs for the first and second derivatives.

\section*{Acknowledgments}
This forms part of the first author's MSc by Research thesis \cite{Durkan} from the University of York. The third author acknowledges support from the Heilbronn Institute for Mathematical Research.

\section{Brief outline of the proof}

For $c=1+\frac{1}{\log T}$ and $\mathcal{R}$ the rectangular contour with vertices $c+i$, $c+i T$, $1-c+iT$, and $1-c+i$,  we may use Cauchy's theorem to write
\begin{align}
    I(\mu,\nu)&=\sum_{0<\gamma\le T}\zeta^{(\mu)}(\rho)\zeta^{(\nu)}(1-\rho) \notag\\
    &=\frac{1}{2\pi i}\oint_{\mathcal{R}}\frac{\zeta'}{\zeta}(s)\zeta^{(\mu)}(s)\zeta^{(\nu)}(1-s)\ ds \notag\\
    &=\frac{1}{2\pi i}\left(\int_{c+i}^{c+iT}+\int_{c+iT}^{1-c+iT}+\int_{1-c+iT}^{1-c+i}+\int_{1-c+i}^{c+i}\right)\frac{\zeta'}{\zeta}(s)\zeta^{(\mu)}(s)\zeta^{(\nu)}(1-s)\ ds \notag\\
    &= I_1(\mu,\nu)+I_2(\mu,\nu)+I_3(\mu,\nu)+I_4(\mu,\nu). \label{eq:defI3}
\end{align}

It is immediate that $I_4(\mu,\nu)=O(1)$ since the integral has finite length and bounded integrand. Furthermore, as shown by Gonek \cite{Gonek1984}, we have $I_2(\mu,\nu)=O\left(T^{\frac{1}{2}+\varepsilon}\right)$ with the harmless restriction 
that $T$ lies a distance $\gg 1/\log T$ from a zero ordinate $\gamma$. We shall use this restriction without loss of generality throughout our proof. From this it follows that 
\begin{equation}\label{eq:I_I1_I3}
    I(\mu,\nu)=I_1(\mu,\nu)+I_3(\mu,\nu)+O\left(T^{\frac{1}{2}+\varepsilon}\right).
\end{equation}

In Section \ref{sect:right_vertical} we deal with the right-hand vertical segment $I_1(\mu,\nu)$. The starting point is by introducing a functional equation which allows us to write $\zeta^{(\nu)}(1-s)$ in terms of $\zeta^{(k)}(s)$ for $k$ between $0$ and $\nu$. As a consequence we may write $I_1(\mu,\nu)$ as an integral of an absolutely convergent Dirichlet series. We then use the method of stationary phase to rewrite this integral as the sum
\begin{equation*}
    \sum_{n_1 n_2 n_3\le\frac{T}{2\pi}}\Lambda(n_1) (\log n_2)^{\mu} (\log n_3)^k (\log n_1 n_2 n_3)^{\nu-k},
\end{equation*}
plus a small error that is subsumed by our error term written above.

We evaluate this sum without the $(\log n_1 n_2 n_3)^{\nu-k}$ factor through Perron's formula. This gives the sum as the residue of a Dirichlet series at $s=1$, which in turn is a polynomial of degree $\mu+\nu+2$. Finally we reinsert the logarithmic term via partial summation.

In Section \ref{sect:left_vertical} we handle the left-hand segment $I_3(\mu,\nu)$. We use another functional equation to relate the left vertical segment of our integral to the right vertical segment, and then follow similar methods to those used in Section \ref{sect:right_vertical} to establish the asymptotics for the left vertical segment. Combining these two contours, together with the error term that we carry through the proof, gives Theorem \ref{mainthm}.

Finally, as mentioned in the introduction,  in Section \ref{conclusion} we prove some corollaries of our result.

\section{The right-hand vertical segment}\label{sect:right_vertical}
\subsection{Initial manipulations} \hfill

We start by writing $I_1(\mu,\nu)$ in a form more amenable to analysis. Recall that \begin{equation*}
    I_1(\mu,\nu)=\frac{1}{2\pi}\int_1^T\frac{\zeta'}{\zeta}(c+it)\zeta^{(\mu)}(c+it)\zeta^{(\nu)}(1-c-it)\ dt.
\end{equation*}
We apply the functional equation for $\zeta^{(\nu)}(1-c-it)$ so we can write the resulting expression in terms of convergent Dirichlet series.

\begin{lemma}\cite[Lemma 4]{discreteMVT}\label{logderiv}
    For $s=\sigma+it$, with $\sigma\ge 1$ and $t\ge 1$ we have 
    \begin{equation}\label{logderiveq}
        \zeta^{(\nu)}(1-s)=(-1)^{\nu}\chi(1-s)\sum_{k=0}^{\nu} \binom{\nu}{k}\left(\log\frac{t}{2\pi}\right)^{\nu-k} \zeta^{(k)}(s)+O\left(t^{\sigma-\frac{3}{2}}(\log t)^{\nu} \right),
    \end{equation}
    where $\chi(s)$ is the factor from the functional equation $\zeta(s)=\chi(s)\zeta(1-s)$.
\end{lemma}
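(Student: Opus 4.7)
The plan is to derive the formula by differentiating the functional equation $\nu$ times in $s$ and then substituting Stirling-type asymptotics for the resulting derivatives of $\chi(1-s)$. Starting from $\zeta(1-s)=\chi(1-s)\zeta(s)$ and applying Leibniz's rule gives
\[
(-1)^{\nu}\zeta^{(\nu)}(1-s)=\sum_{k=0}^{\nu}\binom{\nu}{k}\Big[\tfrac{d^{\nu-k}}{ds^{\nu-k}}\chi(1-s)\Big]\zeta^{(k)}(s),
\]
so the task reduces to proving that, for each $0\le j\le\nu$,
\[
\frac{d^{j}}{ds^{j}}\chi(1-s)=\chi(1-s)\Big(\log\tfrac{t}{2\pi}\Big)^{j}+O\!\Big(|\chi(1-s)|\tfrac{(\log t)^{j-1}}{t}\Big).
\]

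For this claim I would use the Stirling estimates $(\chi'/\chi)(1-s)=-\log(t/2\pi)+O(1/t)$ and $\tfrac{d^{k}}{ds^{k}}(\chi'/\chi)(1-s)=O(1/t^{k})$ for $k\ge1$, both of which follow from the explicit formula $\chi(s)=(2\pi)^{s}/(2\cos(\pi s/2)\Gamma(s))$ and the standard asymptotic expansion of $\psi=\Gamma'/\Gamma$ for $t=\Im s$ large and $\sigma=\Re s$ bounded. Writing $L_k:=\tfrac{d^{k}}{ds^{k}}\log\chi(1-s)$, these give $L_1=\log(t/2\pi)+O(1/t)$ and $L_k=O(1/t^{k-1})$ for $k\ge2$. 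By Faà di Bruno (the complete Bell-polynomial expression for $\chi^{[j]}/\chi$), the singleton partition contributes $L_1^{j}=(\log(t/2\pi))^{j}+O((\log t)^{j-1}/t)$, while every other partition contains a block of size $\ge2$ and so contributes at most $O((\log t)^{j-2}/t)$, which is absorbed into the error.

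Substituting this expansion into the Leibniz identity produces the stated main term. For the error I combine $|\chi(1-s)|\ll t^{\sigma-1/2}$ with the classical bound $\zeta^{(k)}(\sigma+it)\ll(\log t)^{k+1}$ valid uniformly for $\sigma\ge1$ and $t\ge 2$; each of the $\nu+1$ error contributions is then of order $t^{\sigma-3/2}(\log t)^{\nu}$, matching the error in the lemma.

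The main obstacle I foresee is the Faà di Bruno step: showing uniformly in $\sigma\ge 1$ that every sub-leading contribution from the Bell-polynomial expansion really does enjoy a clean $1/t$ saving relative to $(\log(t/2\pi))^{j}$, and being careful with the composition of $s\mapsto 1-s$ so that signs and the imaginary-part factor $t=\Im s$ come out correctly. Once that is in place, the remainder of the proof is routine Leibniz bookkeeping combined with the triangle inequality.
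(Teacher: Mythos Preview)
The paper does not supply its own proof of this lemma: it is simply quoted, with citation, as \cite[Lemma~4]{discreteMVT}. So there is nothing in the paper to compare your argument against.

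That said, your outline is exactly the standard proof one finds in the literature (and presumably in the cited reference): differentiate the functional equation $\zeta(1-s)=\chi(1-s)\zeta(s)$ via Leibniz, then control $\tfrac{d^{j}}{ds^{j}}\chi(1-s)$ using Stirling for $\chi'/\chi$ together with Fa\`a di Bruno (or, equivalently, an easy induction on $j$). Your identification of the main term and the treatment of the error via $|\chi(1-s)|\asymp t^{\sigma-1/2}$ and $\zeta^{(k)}(\sigma+it)\ll(\log t)^{k+1}$ for $\sigma\ge 1$ are both correct; the latter bound follows, as the paper itself notes elsewhere, from $\zeta(\sigma+it)\ll\log t$ in a slightly enlarged region and Cauchy's estimate on a disc of radius $\asymp 1/\log t$. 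The sign and chain-rule bookkeeping you flag as the main obstacle is genuinely the only place one can slip, but it is routine once written out carefully.
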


Substituting \eqref{logderiveq} into our expression for $I_1(\mu,\nu)$ gives
\begin{multline*}
    I_1(\mu,\nu)\\
    =\frac{(-1)^{\nu}}{2\pi}\sum_{k=0}^{\nu} \binom{\nu}{k} \int_1^T\frac{\zeta'}{\zeta}(c+it)\zeta^{(\mu)}(c+it)\zeta^{(k)}(c+it)\chi(1-c-it)\left( \log\frac{t}{2\pi}\right)^{\nu-k} \ dt \\
    +O\left(T^{\frac{1}{2}+\varepsilon} \right).
\end{multline*}

Each term in the integrand can be expressed as a Dirichlet series since $\Re(s)=c>1$, namely 
\begin{equation*}
    \frac{\zeta'}{\zeta}(s)=-\sum_{n=1}^{\infty}\frac{\Lambda(n)}{n^s} \text{ and }
    \zeta^{(\mu)}(s)=(-1)^{\mu}\sum_{n=1}^{\infty}\frac{(\log n)^{\mu}}{n^s},
\end{equation*}
where $\Lambda(n)$ is the von Mangoldt function.

It follows that, by multiplying the Dirichlet series in the integral above together, we can write
\begin{align*}
    \frac{\zeta'}{\zeta}(s)\zeta^{(\mu)}(s)\zeta^{(k)}(s) &= (-1)^{\mu+k+1}\left(\sum_{n_1=1}^{\infty}\frac{\Lambda(n_1)}{n_1^s}\right)\left(\sum_{n_2=1}^{\infty}\frac{(\log n_2)^{\mu}}{n_2^s}\right)\left(\sum_{n_3=1}^{\infty}\frac{(\log n_3)^k}{n_3^s}\right) \\
    &=\sum_{n=1}^{\infty}\frac{A_n^{(\mu,k)}}{n^s}
\end{align*}
where
\begin{equation}\label{eq:Def_A_k_mu_k}
    A_n^{(\mu,k)}:=(-1)^{\mu+k+1}\sum_{n_1n_2n_3=n}\Lambda(n_1)(\log n_2)^{\mu}(\log n_3)^k .
\end{equation}

The integral $I_1(\mu, \nu)$ can then be written as
\begin{multline*}
    I_1(\mu,\nu)\\
    =\frac{(-1)^{\nu}}{2\pi} \sum_{k=0}^{\nu} \binom{\nu}{k} \int_1^T\chi(1-c-it)\left( \log\frac{t}{2\pi}\right)^{\nu-k} \left( \sum_{n=1}^{\infty} \frac{A_n^{(\mu,k)}}{n^{c+it}}\right) \ dt+O\left(T^{\frac{1}{2}}(\log T)^{\nu}\right).
\end{multline*}

We now use a lemma due to Gonek \cite[Lemma 5]{Gonek1984} which comes from the method of stationary phase which will permit us to write this integral as a sum, which we shall subsequently evaluate via the method of Perron.

\begin{lemma}\label{5.1.2}
    Let $\{a_n\}_{n=1}^{\infty}$ be a sequence of complex numbers that satisfy $a_n\ll n^{\varepsilon}$ for arbitrary $\varepsilon>0$. Let $c=1+\frac{1}{\log T}$, and let $m$ be a non-negative integer. Then for $T\ge 1$ we have 
    \begin{equation*}
        \frac{1}{2\pi}\int_1^T\ \chi(1-c-it) \left(\log\frac{t}{2\pi}\right)^m \left( \sum_{n=1}^{\infty} \frac{a_n}{n^{c+it}}\right) \ dt = \sum_{1\le n\le\frac{T}{2\pi}}a_n(\log n)^m+O\left(T^{c-\frac{1}{2}}(\log T)^m\right).
    \end{equation*}
\end{lemma}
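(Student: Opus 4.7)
The plan is to interchange the sum and integral via absolute convergence, replace $\chi(1-c-it)$ by its Stirling-based asymptotic, and apply the method of stationary phase to each resulting oscillatory integral.

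\textbf{Swap and asymptotic.} Since $c=1+1/\log T>1$ and $a_n\ll n^{\varepsilon}$, the Dirichlet series converges absolutely on $\Re(s)=c$ and the remaining factors are uniformly bounded on $[1,T]$, so Fubini yields
\begin{equation*}
    \frac{1}{2\pi}\sum_{n=1}^{\infty}\frac{a_n}{n^{c}}\int_{1}^{T}\chi(1-c-it)\Big(\log\tfrac{t}{2\pi}\Big)^{m}n^{-it}\,dt.
\end{equation*}
I would then insert the standard asymptotic (from Stirling applied to the functional equation)
\begin{equation*}
    \chi(1-c-it)=\Big(\tfrac{t}{2\pi}\Big)^{c-1/2+it}e^{-i(t+\pi/4)}\bigl(1+O(1/t)\bigr),\qquad t\geq 1,
\end{equation*}
which rewrites each integrand as $g_n(t)e^{i\phi_n(t)}$ with phase $\phi_n(t)=t\log(t/(2\pi n))-t$. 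A short computation gives $\phi_n'(t)=\log(t/(2\pi n))$ and $\phi_n''(t)=1/t>0$, so there is a unique stationary point at $t_n=2\pi n$, which lies in $[1,T]$ precisely when $n\leq T/(2\pi)$.

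\textbf{Stationary phase.} For each $n\leq T/(2\pi)$ I would apply a standard stationary phase estimate (such as Titchmarsh's Theorem~4.6). The amplitude at the critical point evaluates to $n^{c-1/2}(\log n)^m$, the geometric factor is $\sqrt{2\pi/\phi_n''(t_n)}=2\pi\sqrt{n}$, and $\phi_n(t_n)=-2\pi n$, so $e^{i\phi_n(t_n)}=1$; moreover the stationary-phase phase $e^{i\pi/4}$ cancels the $e^{-i\pi/4}$ coming from $\chi$. The numerical constants combine cleanly so that the leading contribution from each $n$ reduces exactly to $a_n(\log n)^m$, producing the claimed main term. For $n>T/(2\pi)$ the phase has no critical point in $[1,T]$ and $\phi_n'(t)$ is signed, so repeated integration by parts (or the first-derivative test) bounds these integrals, and summing against $|a_n|/n^c$ keeps the contribution within the asserted error.

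\textbf{Main obstacle.} The chief difficulty is combining the various error sources uniformly in $n$: the $O(1/t)$ term from Stirling, the individual stationary-phase error, the boundary contributions at $t=1$ and $t=T$, and the tail from $n>T/(2\pi)$. The most delicate range is $n$ just above $T/(2\pi)$, where $|\phi_n'|$ is small at the right endpoint and integration by parts is weakest; here one likely needs a version of stationary phase adapted to near-endpoint critical points (or a careful dyadic decomposition in $n$) to ensure the aggregate error remains $O(T^{c-1/2}(\log T)^m)$ without accruing further logarithmic factors.
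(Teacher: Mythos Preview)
Your approach is essentially the same as the one the paper relies on: the paper does not prove this lemma at all but simply quotes it as Gonek's Lemma~5, noting it ``comes from the method of stationary phase.'' Your sketch---swap sum and integral, insert the Stirling asymptotic for $\chi(1-s)$, and apply stationary phase term by term with critical point $t_n=2\pi n$---is precisely Gonek's argument, so there is nothing to compare.
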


Then by Lemma \ref{5.1.2}, we can rewrite $I_1(\mu,\nu)$ as the sum
\begin{equation} \label{eq:I1_as_short_DS}
    I_1(\mu,\nu)= \sum_{k=0}^{\nu} (-1)^{\nu} \binom{\nu}{k} 
\sum_{n\le\frac{T}{2\pi}}A_n^{(\mu,k)}(\log n)^{\nu-k}+O\left(T^{\frac{1}{2}+\varepsilon}\right).
\end{equation}

\subsection{Evaluating the inner sum without the logarithm}\label{subsection: sum=residue} \hfill

Throughout this section we  we set $Y = \frac{T}{2\pi}$ for notational convenience.

We now evaluate the sum in \eqref{eq:I1_as_short_DS}. One could use Perron's formula, relying on the Dirichlet series expansion
\[
\sum_{n=1}^\infty \frac{A_n^{(\mu,k)}(\log n)^{\nu-k}}{n^s} = (-1)^{\nu-k} \frac{d^{\nu-k}}{ds^{\nu-k}} \left(\frac{\zeta'}{\zeta}(s)\zeta^{(\mu)}(s)\zeta^{(k)}(s) \right).
\]
However it turns out to be simpler and more convenient to use Perron's formula to calculate $\sum_{n\le Y} A_n^{(\mu,k)}$ without any logarithms, and then use partial summation to reinsert them later. 

We start by using a truncated version of Perron's formula.

\begin{lemma}\label{lem11DMVT}
    Let $A_n^{(\mu,k)}$ be given by \eqref{eq:Def_A_k_mu_k}. For $2\le V\le Y$, as $Y\to\infty$, 
    \begin{equation*}
        \sum_{n\le Y}A_n^{(\mu,k)}=\frac{1}{2\pi i}\int_{c-iV}^{c+iV}\frac{\zeta'}{\zeta}(s)\zeta^{(\mu)}(s)\zeta^{(k)}(s)\frac{Y^s}{s} \ ds+R_1(Y,V),
    \end{equation*}
    where 
    \begin{equation*}
        R_1(Y,V)= O\left(\frac{Y}{V}(\log Y)^{\mu+k+3}\right).
    \end{equation*}
\end{lemma}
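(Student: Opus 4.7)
The plan is to apply the standard truncated form of Perron's formula to the Dirichlet series
\[
\sum_{n=1}^{\infty} \frac{A_n^{(\mu,k)}}{n^s} = \frac{\zeta'}{\zeta}(s)\,\zeta^{(\mu)}(s)\,\zeta^{(k)}(s),
\]
which converges absolutely in $\Re(s) > 1$. With the contour taken at $c = 1 + 1/\log Y$ (so that $Y^c \asymp Y$) and truncated at height $V$, the effective truncated Perron formula yields
\[
\sum_{n \le Y} A_n^{(\mu,k)} = \frac{1}{2\pi i} \int_{c-iV}^{c+iV} \frac{\zeta'}{\zeta}(s)\,\zeta^{(\mu)}(s)\,\zeta^{(k)}(s)\,\frac{Y^s}{s}\,ds + E(Y,V),
\]
with a truncation error controlled by
\[
E(Y,V) \ll \frac{Y^c}{V} \sum_{n \ne Y} \frac{|A_n^{(\mu,k)}|}{n^c \bigl(1 + |\log(Y/n)|\bigr)}
\]
plus a local contribution from $n$ near $Y$.

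The central estimate is the bound for the Dirichlet series of absolute values. From the definition \eqref{eq:Def_A_k_mu_k},
\[
|A_n^{(\mu,k)}| \le \sum_{n_1 n_2 n_3 = n} \Lambda(n_1)(\log n_2)^{\mu}(\log n_3)^{k},
\]
so summing over $n$ with weight $n^{-c}$ for real $c > 1$ gives
\[
\sum_{n=1}^{\infty} \frac{|A_n^{(\mu,k)}|}{n^c} \le \Bigl|\tfrac{\zeta'}{\zeta}(c)\Bigr|\,|\zeta^{(\mu)}(c)|\,|\zeta^{(k)}(c)| \ll (\log Y)^{\mu+k+3},
\]
where the final estimate follows from the Laurent expansions around $s=1$: the three factors contribute poles of orders $1$, $\mu+1$, and $k+1$ respectively, and $c - 1 = 1/\log Y$. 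The contribution to $E(Y,V)$ from $n$ with $|\log(Y/n)| \ge \log 2$ is therefore $O\bigl(Y(\log Y)^{\mu+k+3}/V\bigr)$, which matches the claimed bound for $R_1(Y,V)$.

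The main technical obstacle is the range $Y/2 < n < 2Y$, where $|\log(Y/n)|$ may be arbitrarily small. This is handled by the standard device of choosing $Y$ a distance $\gg 1$ from the nearest integer (or replacing $Y$ by $Y + \tfrac12$ when $Y$ is an integer, which alters the left-hand side by at most $|A_{\lceil Y \rceil}^{(\mu,k)}| \ll Y^{\varepsilon}$), and then dyadically decomposing $|Y-n|$: on each shell $2^{-j-1} Y < |Y-n| \le 2^{-j} Y$ one has $|\log(Y/n)| \gg 2^{-j}$, and one combines this with the crude bound $A_n^{(\mu,k)} \ll n^{\varepsilon}(\log n)^{\mu+k+1}$. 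Summing the resulting geometric series gives a contribution of $O(Y^{\varepsilon})$, which is absorbed into the stated error term. Assembling the pieces delivers the asserted bound for $R_1(Y,V)$.
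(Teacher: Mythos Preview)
Your treatment of the ``far'' range (bounding by the full Dirichlet series at $s=c$ and invoking the Laurent expansions to get $(\log Y)^{\mu+k+3}$) is correct and is exactly what the paper does for its term $B$.

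The gap is in the ``near'' range $Y/2<n<2Y$. Your claim that the dyadic decomposition together with the crude pointwise bound $A_n^{(\mu,k)}\ll n^{\varepsilon}$ yields $O(Y^{\varepsilon})$ is not right. In the shell $2^{-j-1}Y<|Y-n|\le 2^{-j}Y$ there are $\asymp 2^{-j}Y$ integers, each contributing $\ll Y^{\varepsilon}\min\bigl(1,\tfrac{Y}{V|Y-n|}\bigr)\asymp Y^{\varepsilon}\min(1,2^{j}/V)$; hence the shell contributes $\asymp Y^{1+\varepsilon}/V$ whenever $2^{j}\le V$, and these $\asymp\log V$ shells all contribute the \emph{same} amount --- there is no geometric decay. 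The total near-range contribution is therefore $\asymp Y^{1+\varepsilon}(\log V)/V$, not $O(Y^{\varepsilon})$. Since $Y^{\varepsilon}$ eventually dominates every fixed power of $\log Y$, this bound is strictly weaker than the lemma's stated error $\tfrac{Y}{V}(\log Y)^{\mu+k+3}$, and in particular it would wreck the unconditional application (where one takes $V=\exp(\sqrt{C\log Y})$).

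The paper avoids this by using a pointwise bound that reflects the average size of the coefficients: $|A_n^{(\mu,k)}|\le(\log n)^{\mu+k+1}d_3(n)$, and then controlling the near-range sum via $\sum_{\ell\le Y}d_3(\ell)/\ell\ll(\log Y)^3$. This replaces your $Y^{\varepsilon}$ loss by a genuine power of $\log Y$ and delivers the stated error.
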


\begin{proof}
Bounding $R_1(Y,V)$ is all we need to do to prove the lemma, since the integral comes directly from Perron's formula (and the truncation gives the error term). By \cite[Cor 5.3]{Montgomery_Vaughan_2006} we can bound the remainder as 
\begin{align}
    R_1(Y,V)&\ll\sum_{\frac{Y}{2}<n<2Y}|A_n^{(\mu,k)}|\min\left(1,\frac{Y}{V|Y-n|}\right)+\frac{4^c+Y^c}{V}\sum_{n\ge 1}\frac{|A_n^{(\mu,k)}|}{ n^c}\notag\\
    &=A+B\label{perron2},
\end{align}
say. To evaluate the error, we start by giving an upper bound for $A_n^{(\mu,k)}$. Note that $\Lambda(n)\leq \log n$  with equality holding if and only if $n$ is prime. We bound $A_n^{(\mu,k)}$ crudely by
\begin{equation*}
    \left|A_n^{(\mu,k)}\right| \leq \sum_{n_1n_2n_3=n}(\log n)(\log n)^{\mu}(\log n)^{k}= (\log n)^{\mu+k+1}d_3(n),
\end{equation*}
where $d_3(n)$ is the $3$-fold divisor function. Bounding $A$ in \eqref{perron2} by the maximum of $A_n^{(\mu,k)}$ and using the notation $\ell=|Y-m|$, 
\begin{align*}
    A&\ll (\log Y)^{\mu+k+1}\sum_{\frac{Y}{2}<m<2Y}d_3(m)\min\left(1,\frac{Y}{V\ell}\right)\\&\ll(\log Y)^{\mu+k+1}\sum_{\ell\le Y}\frac{Y}{V}\frac{1}{\ell}d_3(\ell)\\&\ll\frac{Y}{V}(\log Y)^{\mu+k+4},
\end{align*}
using the fact that $\sum_{\ell\le Y}\frac{d_3(\ell)}{\ell}\ll (\log Y)^3$ (see, for example, \cite[p.43]{Montgomery_Vaughan_2006}). 

Next we turn to bounding $B$ in \eqref{perron2}. Since $c=1+\frac{1}{\log T}$ the Dirichlet series converges, and so \begin{equation*}
    B=\frac{4^c+Y^c}{V}\left|\frac{\zeta'}{\zeta}(c)\zeta^{(\mu)}(c)\zeta^{(k)}(c)\right|\ll\frac{Y}{V}(\log Y)^{\mu+k+3},
\end{equation*}
using the fact that $Y = \frac{T}{2\pi}$ and also that 
\begin{align}
    \frac{\zeta'}{\zeta}\left(1+\frac{1}{\log T}\right) &\ll \log T, \label{eq:LogDerivZetaNearPole}\\
\intertext{and}
    \zeta^{(\mu)}\left(1+\frac{1}{\log T}\right) &\ll (\log T)^{\mu+1}\label{eq:DerivZetaNearPole}
\end{align}
as required.
\end{proof}

Next we write the integral in Lemma \ref{lem11DMVT} in terms of the residue of the integrand at $s=1$. We obtain an error term from completing the contour into a rectangle which we will deal with in Lemma \ref{lem12DMVT}. We remark that this lemma is the only place where we get different errors depending on whether we assume the Riemann Hypothesis or not, so we shall consider both cases separately.

\begin{lemma}\label{lem12DMVT}
    For $Y=\frac{T}{2\pi}$, as $V,Y\to\infty$, we have \begin{equation*}
        \frac{1}{2\pi i}\int_{c-iV}^{c+iV}\frac{\zeta'}{\zeta}(s)\zeta^{(\mu)}(s)\zeta^{(k)}(s)\frac{Y^s}{s}\ ds=\res_{s=1}\left(\frac{\zeta'}{\zeta}(s)\zeta^{(\mu)}(s)\zeta^{(k)}(s)\frac{Y^s}{s}\right)+\mathcal{E}_1(Y,V),
    \end{equation*}
    where $\mathcal{E}_1(\mu,k;Y)$ is given by \begin{enumerate}
        \item $\mathcal{E}_1(\mu,k;Y) = \left( Ye^{-C\sqrt{\log Y}}\right)$ unconditionally for a positive constant $C$
        \item $\mathcal{E}_1(\mu,k;Y) = \left( Y^{\frac{1}{2}+\varepsilon}\right)$ under the Riemann Hypothesis.
    \end{enumerate}
\end{lemma}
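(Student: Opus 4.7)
The plan is to shift the contour from $\Re(s)=c$ leftwards by applying Cauchy's theorem on the rectangle with vertices $c\pm iV$ and $\sigma_0\pm iV$, where $\sigma_0<1$ will be chosen depending on whether we assume the Riemann Hypothesis. The only pole of the integrand inside this rectangle is at $s=1$: the factor $1/s$ contributes only at $s=0$, which lies outside, and $\sigma_0$ will be chosen so that $\zeta$ has no zeros in the enclosed region. That pole has order $\mu+k+3$, coming from the simple pole of $\zeta'/\zeta$ together with the poles of order $\mu+1$ and $k+1$ of the two $\zeta$-derivatives, which is consistent with the Laurent expansion of $c_{j}^{(\mu,k)}$. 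Cauchy's theorem then identifies $\frac{1}{2\pi i}\int_{c-iV}^{c+iV}$ with the residue at $s=1$ plus the contributions from the left vertical segment at $\Re(s)=\sigma_0$ and the two horizontal segments at heights $\pm V$, which together constitute $\mathcal{E}_1(Y,V)$.

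In the unconditional case, I would take $\sigma_0=1-c_0/\log V$, so that the classical de la Vall\'ee Poussin zero-free region keeps the rectangle zero-free. Standard estimates in this region give $\frac{\zeta'}{\zeta}(s)\ll\log^{2}(|t|+2)$ and $\zeta^{(j)}(s)\ll\log^{j+1}(|t|+2)$; combined with $|Y^{s}/s|=Y^{\sigma_0}/|s|$, the left vertical side is bounded by $Y^{1-c_0/\log V}(\log V)^{\mu+k+4}$, while the two horizontal segments, of length $O(1)$ with $1/|s|\ll 1/V$, contribute at most $Y(\log V)^{\mu+k+3}/V$. Choosing $V=\exp(c'\sqrt{\log Y})$ balances these two terms and yields the stated error $Y\exp(-C\sqrt{\log Y})$, after absorbing the polynomial factors in $\log V$ by a slight decrease of $C$.

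Under the Riemann Hypothesis, the zero-free region extends to $\sigma>1/2$, so I would shift all the way to $\sigma_0=\frac{1}{2}+\varepsilon$. The RH bounds $\frac{\zeta'}{\zeta}(s),\zeta^{(j)}(s)\ll_{\varepsilon}|t|^{\varepsilon}$ for $\Re(s)\geq\frac{1}{2}+\varepsilon$ then give a vertical contribution bounded by $Y^{1/2+\varepsilon}V^{\varepsilon}$ and horizontal contributions of size $Y^{1+\varepsilon}/V$. Taking $V$ to be a small fixed power of $Y$ produces the claimed error $O(Y^{1/2+\varepsilon})$.

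The main obstacle is the unconditional case: carefully bookkeeping the polynomial factors in $\log V$ on the shifted vertical line, and verifying that the admissible $\sigma_0$ from the zero-free region makes $Y^{\sigma_0-1}$ small enough to absorb them. A minor technicality is ensuring that the horizontal sides at height $\pm V$ can be placed at distance $\gg 1/\log V$ from any zero ordinate, consistent with the restriction already imposed on $T$ after \eqref{eq:I_I1_I3}; this is handled by choosing $V$ suitably within a dyadic range.
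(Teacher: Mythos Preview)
Your proposal is correct and follows essentially the same route as the paper: shift the contour to $\sigma_0=1-c_0/\log V$ (unconditionally) or just right of $1/2$ (under RH), bound the horizontal and left-vertical pieces via the standard estimates $\zeta'/\zeta\ll\log^2$ and $\zeta^{(j)}\ll\log^{j+1}$ (respectively $\ll t^{\varepsilon}$ under RH), and then optimise $V$. The only slip is in the conditional case: your horizontal bound $Y^{1+\varepsilon}/V$ forces $V\gg Y^{1/2}$, so ``a small fixed power of $Y$'' should read ``a power of $Y$ at least $1/2$'' (the paper simply takes $V=Y$).
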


\begin{proof}[Proof of Lemma \ref{lem12DMVT}]
We consider the positively oriented rectangular contour with vertices at $c\pm iV, c'\pm iV$, where $\frac{1}{2} < c'<1$ is large enough that no zeros of zeta with $|\gamma| \leq V$ lie on or to the right of the $c'$ line. Therefore the only pole contained within this contour is at $s=1$, and by Cauchy's Residue Theorem,
\begin{multline*}
    \res_{s=1}\left(\frac{\zeta'}{\zeta}(s)\zeta^{(\mu)}(s)\zeta^{(k)}(s)\frac{Y^s}{s}\right) \\
    =\frac{1}{2\pi i}\left(\int_{c-iV}^{c+iV}+\int_{c+iV}^{c'+iV}+\int_{c'+iV}^{c'-iV}+\int_{c'-iV}^{c-iV}\right)\frac{\zeta'}{\zeta}(s)\zeta^{(\mu)}(s)\zeta^{(k)}(s)\frac{Y^s}{s}\ ds.
\end{multline*}

By rearranging this expression and switching the orientation of some of the integrals we have that the desired integral  is equal to
\begin{multline*}
    \frac{1}{2\pi i}\int_{c-iV}^{c+iV}\frac{\zeta'}{\zeta}(s)\zeta^{(\mu)}(s)\zeta^{(k)}(s)\frac{Y^s}{s} \ ds = \res_{s=1}\left(\frac{\zeta'}{\zeta}(s)\zeta^{(\mu)}(s)\zeta^{(k)}(s)\frac{Y^s}{s}\right)
    \\+\frac{1}{2\pi i}\left(\int_{c'+iV}^{c+iV}+\int_{c'-iV}^{c'+iV}-\int_{c'-iV}^{c-iV}\right)\frac{\zeta'}{\zeta}(s)\zeta^{(\mu)}(s)\zeta^{(k)}(s)\frac{Y^s}{s} \ ds.
    \end{multline*}

The result will follow from finding appropriate bounds for the three line integrals. We consider the conditional and unconditional cases separately as they involve different values of $c'$ and different bounds for zeta.

\subsubsection*{The unconditional case}\hfill

As noted in Titchmarsh \cite[p.54]{titchmarsh}, there exists some absolute constant $C>0$ such that for $c'=1-\frac{C}{\log V}$, any zero of $\zeta(s)$ lies a distance $\gg \frac{1}{\log V}$ away from the line between $c'-iV$ and $c'+iV$. 

Therefore, on the two horizontal pieces we may apply the bound of Gonek~\cite[\S 2]{Gonek1984} which applies uniformly for $-1 \leq \sigma \leq 2$
\[
\frac{\zeta'}{\zeta}(\sigma \pm iV) \ll (\log V)^2 .
\]
We also use the convexity bounds inside and to the right of the critical strip
\begin{equation} \label{eq:convexity}
\zeta^{(\mu)} (\sigma \pm iV) \ll
\begin{cases}
V^{\frac{1}{2}(1- \sigma)} (\log V)^{\mu+1} &\text{ if $ 0 \leq \sigma \leq 1$}\\
(\log V)^{\mu+1} &\text{ if $\sigma \geq 1$,}
\end{cases}
\end{equation}
which may be established by using Ivic's bound for $\nu=1$ \cite{ivic} and applying Cauchy's estimate for derivatives of analytic functions around a disc of radius $1/\log V$ with centre at $\sigma+iV$.

With these bounds we deduce that
\begin{equation*}
    \frac{1}{2\pi i}\int_{c'\pm iV}^{c\pm iV}\frac{\zeta'}{\zeta}(s)\zeta^{(\mu)}(s)\zeta^{(k)}(s)\frac{Y^s}{s}\ d\sigma\ll (\log V)^{\mu+k+4}\frac{Y^c}{V}(c-c')\ll \frac{Y}{V}(\log V)^{\mu+k+3},
\end{equation*}
where we have used the fact that $c-c'\ll \frac{1}{\log V}$ and $Y^c = \ll Y$.

Now we turn to the vertical segment of the contour. Away from the pole, we can use the same bounds as above, and for $t\approx 0$ we bound the terms in the integral by the appropriate Laurent expansions, \eqref{eq:LogDerivZetaNearPole} and \eqref{eq:DerivZetaNearPole}. Therefore on the line $s=c'+it$, $-V\le t\le V$, we can bound the integral by 
\begin{equation*}
Y^{c'} (\log V)^{\mu+k+4} + \int_{1}^V (\log V)^{\mu+k+4} \frac{Y^{c'}}{t}\ dt \ll Y^{c'}(\log V)^{\mu+k+5}.
\end{equation*}

Since $Y^{c'}=Y\exp\left(-\frac{C\log Y}{\log V}\right)$ this completes the proof of the unconditional case.

\subsubsection*{The conditional case}\hfill

 Under the assumption of the Riemann Hypothesis, we can both shift the line of integration further to the left without crossing any zeros, and also use better (conditional) bounds on zeta and its derivatives. Specifically we shift the contour to just to the right of the critical line, letting $c'=\frac{1}{2}+\frac{1}{\log V}$, and we use $\zeta^{(n)}(s)\ll t^{\varepsilon}$ for arbitrary $\varepsilon>0$.

At any point on the vertical segment the closest zero is $\ge \frac{1}{\log V}$ away, so we have $\frac{\zeta'}{\zeta}(s)\ll (\log V)^2$ and so  the vertical integral is bounded by
\begin{equation*}
Y^{\frac{1}{2}} \exp\left(\frac{\log Y}{\log V}\right) V^\varepsilon
\end{equation*}
for arbitrary $\varepsilon>0$.

The horizontal segments are dealt with similarly to before, only this time the length of integration is bounded and we will bound terms like $\log V$ by $V^\varepsilon$ since that's sufficient for our purposes. The two horizontal pieces are bounded by
\begin{equation*}
    \frac{Y}{V} V^{\varepsilon}
\end{equation*}
and this completes the proof in the conditional case.
\end{proof}

\subsubsection*{Completion of the proof of Lemma~\ref{lem12DMVT}}\hfill

Using Lemma \ref{lem11DMVT}, thus far in Lemma \ref{lem12DMVT} we have shown that 
\begin{equation*}
    \sum_{n\le Y}A_n^{(\mu,k)}=\res_{s=1}\left(\frac{\zeta'}{\zeta}(s)\zeta^{(\mu)}(s)\zeta^{(k)}(s)\frac{Y^s}{s}\right)+R_1(Y,V)+\mathcal{E}_1(V,Y).
\end{equation*}
where we now pick an optimal $V$ in terms of $Y$, depending on whether we are bounding the error terms conditionally or unconditionally.

In the unconditional case, we can choose $V=\exp(\sqrt{C\log Y})$ to optimise the error terms. This then gives an error term of 
\begin{equation*}
 R_1(Y,V)+\mathcal{E}_1(V,Y)   \ll Ye^{-\sqrt{C\log Y}}(\log Y)^{\frac{\mu+k+5}{2}}\ll Y\exp(-\Tilde{C}\sqrt{\log Y})
\end{equation*}
for some $\Tilde{C}>0$.

In the conditional case, we take $V=Y$ and find 
\[
R_1(Y,V)+\mathcal{E}_1(V,Y) \ll Y^{\frac{1}{2} + \varepsilon},
\]
for arbitrary $\varepsilon>0$, as stated in Lemma~\ref{lem12DMVT}.

We now compute the residue term. In the following lemma we will write the residue in terms of the Laurent coefficients of our triple Dirichlet series given in \eqref{eq:Def_A_k_mu_k} around $s=1$.

\begin{lemma}\label{bccoeffs}
    We have \begin{equation}\label{bccoeffs_eq}
        \mathrm{Res}_{s=1}\left(\frac{\zeta'}{\zeta}(s)\zeta^{(\mu)}(s)\zeta^{(k)}(s)\frac{Y^s}{s}\right)=Y\sum_{j=0}^{\mu+k+2}\frac{c^{(\mu,k)}_{j}}{(\mu+k+2-j)!}(\log Y)^{\mu+k+2-j},
    \end{equation}
     where $c^{\mu,k}_{j}$ are the Laurent series coefficients around $s=1$ of \begin{equation}\label{ccoeffs}
    \frac{\zeta'}{\zeta}(s)\zeta^{(\mu)}(s)\zeta^{(k)}(s)\frac{1}{s}=\sum_{j=0}^{\infty}c^{(\mu,k)}_{j}(s-1)^{-\mu-k-3+j}.
\end{equation}
\end{lemma}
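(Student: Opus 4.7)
The proof should be a direct calculation: expand $Y^{s}$ as a Taylor series around $s=1$, multiply by the Laurent series \eqref{ccoeffs}, and read off the coefficient of $(s-1)^{-1}$.

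The plan is as follows. First I would note that the function
\begin{equation*}
    \frac{\zeta'}{\zeta}(s)\zeta^{(\mu)}(s)\zeta^{(k)}(s)\frac{1}{s}
\end{equation*}
has an isolated pole at $s=1$ of order exactly $\mu+k+3$: the factor $\zeta'/\zeta$ contributes a simple pole, each $\zeta^{(j)}$ contributes a pole of order $j+1$ (since $\zeta$ has a simple pole at $s=1$ and differentiation raises the pole order by one), and $1/s$ is holomorphic at $s=1$. This is consistent with the shape of the Laurent series in \eqref{ccoeffs}, namely
\begin{equation*}
    \frac{\zeta'}{\zeta}(s)\zeta^{(\mu)}(s)\zeta^{(k)}(s)\frac{1}{s} = \sum_{j=0}^{\infty} c^{(\mu,k)}_{j}(s-1)^{-\mu-k-3+j},
\end{equation*}
which converges in some punctured neighbourhood of $s=1$.

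Next I would expand $Y^s$ around $s=1$ by writing $Y^s = Y \cdot e^{(s-1)\log Y}$, giving the entire Taylor series
\begin{equation*}
    Y^s = Y \sum_{n=0}^{\infty} \frac{(\log Y)^{n}}{n!}(s-1)^{n}.
\end{equation*}
Multiplying this against the Laurent expansion above yields
\begin{equation*}
    \frac{\zeta'}{\zeta}(s)\zeta^{(\mu)}(s)\zeta^{(k)}(s)\frac{Y^s}{s} = Y \sum_{j=0}^{\infty}\sum_{n=0}^{\infty} c^{(\mu,k)}_{j} \frac{(\log Y)^{n}}{n!}(s-1)^{n-\mu-k-3+j}.
\end{equation*}

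The residue at $s=1$ is the coefficient of $(s-1)^{-1}$, so we restrict to indices with $n-\mu-k-3+j=-1$, i.e.\ $n=\mu+k+2-j$. Since $n\ge 0$ we require $j\le \mu+k+2$, and summing over these finitely many contributions gives exactly
\begin{equation*}
    \mathrm{Res}_{s=1}\left(\frac{\zeta'}{\zeta}(s)\zeta^{(\mu)}(s)\zeta^{(k)}(s)\frac{Y^s}{s}\right) = Y\sum_{j=0}^{\mu+k+2}\frac{c^{(\mu,k)}_{j}}{(\mu+k+2-j)!}(\log Y)^{\mu+k+2-j},
\end{equation*}
which is \eqref{bccoeffs_eq}. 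There is no real obstacle here; the only thing to double-check is that the pole order is as claimed, so that the sum over $j$ correctly truncates at $j=\mu+k+2$ and nothing from higher powers of $(\log Y)$ is missed.
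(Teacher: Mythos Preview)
Your proposal is correct and follows essentially the same approach as the paper's own proof: expand $Y^s$ around $s=1$, multiply by the Laurent series \eqref{ccoeffs}, and pick out the coefficient of $(s-1)^{-1}$. If anything, you are slightly more explicit about why the pole order is $\mu+k+3$ and about the Cauchy-product computation than the paper is.
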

\begin{proof}
The function $\frac{\zeta'}{\zeta}(s)\zeta^{(\mu)}(s)\zeta^{(k)}(s)\frac{1}{s}$ has a pole of order $\mu+k+3$ at $s=1$. We write this expansion as the series given in \eqref{ccoeffs}.

The expansion of $Y^s$ about $s=1$ is 
\begin{equation*}
    Y^s=Y\left(1+(s-1)\log Y+\frac{(s-1)^2}{2!}(\log Y)^2+\cdots+\frac{(s-1)^k}{k!}(\log Y)^k+\cdots\right).
\end{equation*}
and so the residue of $\frac{\zeta'}{\zeta}(s)\zeta^{(\mu)}(s)\zeta^{(k)}(s)\frac{Y^s}{s}$ is given by the sum of 
\begin{equation*}
    \frac{c^{(\mu,k)}_{j}}{(\mu+k+2-j)!} (\log Y)^{\mu+k+2-j}
\end{equation*}
for each $j=0,\dots,\mu+k+2$.
\end{proof}

\subsection{Reinserting the logarithm in the inner sum}\label{subsection:logarithm_inner_sum} \hfill

Throughout this section we  we set $Y = \frac{T}{2\pi}$ for notational convenience.

\begin{lemma}\label{lemma_messy_binomial}
    For $0\leq k \leq \nu$, we have that 
    \begin{equation*}
        \sum_{n\le Y}A_n^{(\mu,k)}(\log n)^{\nu-k}=Y\sum_{m=0}^{\mu+\nu+2}C_1^{(\mu,\nu)}(m,k)(\log Y)^m+O\left(Ye^{-C\sqrt{\log Y}}\right),
    \end{equation*}
where for $ m \geq \nu-k$,
\begin{equation*}
C_1^{(\mu,\nu)}(m,k) = \frac{(-1)^m(\nu-k)}{m!}\sum_{j=0}^{\mu+\nu+1-m} (-1)^{\mu+\nu-j} \frac{(\mu+\nu+1-j)! }{(\mu+k+2-j)!} c^{(\mu,k)}_{j} + 
\frac{c^{(\mu,k)}_{\mu+\nu+2-m}}{(k+m-\nu)!}
\end{equation*}
and for $m < \nu-k $
\begin{equation*}
C_1^{(\mu,\nu)}(m,k) = \frac{(-1)^m(\nu-k)}{m!}\sum_{j=0}^{\mu+k+2} (-1)^{\mu+\nu-j} \frac{(\mu+\nu+1-j)! }{(\mu+k+2-j)!} c^{(\mu,k)}_{j} 
\end{equation*}
\end{lemma}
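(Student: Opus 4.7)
The plan is to apply Abel's summation formula to transfer the factor $(\log n)^{\nu-k}$ out of the sum, substitute the asymptotic for the unweighted partial sum $S(u) := \sum_{n \le u}A_n^{(\mu,k)}$ obtained by combining Lemmas~\ref{lem12DMVT} and~\ref{bccoeffs}, and then evaluate an elementary integral of a polynomial in $\log u$. Concretely, with $S(u) = u\,Q(\log u) + E(u)$, where
\[
Q(x) = \sum_{j=0}^{\mu+k+2}\frac{c^{(\mu,k)}_{j}}{(\mu+k+2-j)!}\,x^{\mu+k+2-j}
\]
and $E(u) \ll u\,e^{-C\sqrt{\log u}}$, Abel summation gives
\[
\sum_{n\le Y} A_n^{(\mu,k)}(\log n)^{\nu-k} = (\log Y)^{\nu-k} S(Y) - (\nu-k)\int_1^Y S(u)\,\frac{(\log u)^{\nu-k-1}}{u}\,du.
\]

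The contribution of $E$ to the right-hand side is $O(Ye^{-C'\sqrt{\log Y}})$ for any $C' < C$: after the substitution $v = \log u$ the relevant integral becomes $\int_0^{\log Y} v^{\nu-k-1}e^{v-C\sqrt{v}}\,dv$, which one splits at $v = (1-\delta)\log Y$ and bounds crudely on each piece. Shrinking $C$ if necessary absorbs the logarithmic loss into the stated error.

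The main term reduces to integrals of the form $\int_1^Y (\log u)^a\,du$ with $0 \le a \le \mu+\nu+1$, which by the substitution $v = \log u$ and repeated integration by parts equal
\[
Y\sum_{i=0}^a (-1)^i\,\frac{a!}{(a-i)!}\,(\log Y)^{a-i} - (-1)^a a!,
\]
with the constant $-(-1)^a a!$ contributing only $O(1)$ that is absorbed into the error. Substituting back, reindexing via $m = a - i$ so that $m$ denotes the exponent of $\log Y$ we are tracking, and collecting coefficients yields the claimed polynomial $Y\sum_m C_1^{(\mu,\nu)}(m,k)(\log Y)^m$. The boundary piece $Y(\log Y)^{\nu-k}Q(\log Y)$ contributes $\frac{c^{(\mu,k)}_{\mu+\nu+2-m}}{(k+m-\nu)!}$ to the $(\log Y)^m$ coefficient when $m \ge \nu-k$ and nothing otherwise. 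The $j$-sum in the integral contribution has upper limit $\min(\mu+k+2,\,\mu+\nu+1-m)$, which equals $\mu+\nu+1-m$ when $m \ge \nu-k$ and $\mu+k+2$ when $m \le \nu-k-1$; this is precisely the case split in the statement. The sign $(-1)^{\mu+\nu-j}$ arises from combining $(-1)^i = (-1)^{\mu+\nu+1-j-m}$ with the outer minus sign from partial summation and the factor $(-1)^m$ pulled out in front.

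The case $\nu = k$ is not special: the integrand in Abel's formula vanishes because of the $(\nu-k)$ prefactor, while the first case of the stated formula reduces to $\frac{c^{(\mu,k)}_{\mu+\nu+2-m}}{m!}$ and the second case is vacuous, so the assertion collapses to Lemmas~\ref{lem12DMVT} and~\ref{bccoeffs}. The only genuine difficulty in the argument is the index bookkeeping needed to verify the case split and signs; every other step is elementary, and the analytic estimates follow directly from the preceding lemmas.
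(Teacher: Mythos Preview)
Your proposal is correct and follows essentially the same route as the paper's proof: Abel summation, substitution of the asymptotic for $\sum_{n\le u}A_n^{(\mu,k)}$ (which the paper attributes to Lemmas~\ref{lem11DMVT}, \ref{lem12DMVT} and \ref{bccoeffs} together), evaluation of $\int_1^Y(\log t)^a\,dt$ via repeated integration by parts, and then a swap of summation order that produces the $\min(\mu+k+2,\mu+\nu+1-m)$ upper limit responsible for the case split. Your error-term handling and your remark on the degenerate case $k=\nu$ are slightly more explicit than the paper's, but the argument is the same.
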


\begin{remark*}
The differences in the two expressions for $C_1^{(\mu,\nu)}(m,k)$ are the extra term for larger $m$, and the different upper limit on the $j$-sums. Those two upper limits are the same when $m = \nu-k-1$. Finally, when $m=\mu+\nu+2$, which is the largest it can be, the $j$-sum is empty.
\end{remark*}

\begin{proof}
We have shown using Lemmas \ref{lem11DMVT}, \ref{lem12DMVT} and \ref{bccoeffs} that
\begin{equation*}
    \sum_{n\le Y}A_n^{(\mu,k)}=Y\sum_{j=0}^{\mu+k+2}\frac{c^{(\mu,k)}_{j}}{(\mu+k+2-j)!}(\log Y)^{\mu+k+2-j}+O\left(Ye^{-C\sqrt{\log Y}}\right).
\end{equation*}
By partial summation we have
\begin{equation}\label{eq:partialSummation}
    \sum_{n \le Y} A_n^{(\mu,k)} (\log n)^{\nu-k} = \left(\sum_{n\le Y}A_n^{(\mu,k)}\right)(\log Y)^{\nu-k}-(\nu-k)\int_1^Y \left(\sum_{n\le t}A_n^{(\mu,k)}\right)\frac{(\log t)^{\nu-k-1}}{t}\ dt.
\end{equation}

Clearly
\begin{multline*}
\left(\sum_{n\le Y}A_n^{(\mu,k)}\right)(\log Y)^{\nu-k} = Y \sum_{j=0}^{\mu+k+2} \frac{c^{(\mu,k)}_{j}}{(\mu+k+2-j)!} (\log Y)^{\mu+\nu+2-j}\\
+O\left((\log Y)^{\nu-k}Ye^{-C\sqrt{\log Y}}\right),
\end{multline*}
where $C>0$ isn't necessarily the same constant throughout. Relabeling the sum so $m = \mu+\nu+2-j$ (to help make the power of $\log Y$ clear), the first piece in \eqref{eq:partialSummation} is
\begin{equation}\label{eq:partial1}
\sum_{n\le Y}A_n^{(\mu,k)} (\log Y)^{\nu-k} = Y\sum_{m=\nu-k}^{\mu+\nu+2} \frac{c^{(\mu,k)}_{\mu+\nu+2-m}}{(m+k-\nu)!}(\log Y)^{m}
+O\left(Ye^{-C\sqrt{\log Y}}\right).
\end{equation}

Next, note that the second term in \eqref{eq:partialSummation} is
\begin{equation}\label{eq:partial2}
(\nu-k)\sum_{j=0}^{\mu+k+2}\frac{c^{(\mu,k)}_{j}}{(\mu+k+2-j)!}\int_1^Y(\log t)^{\mu+\nu+1-j}\ dt +O\left(Ye^{-C\sqrt{\log Y}}\right).
\end{equation}
Note that, by repeated integration by parts,
\begin{equation}\label{usefulintegral}
    \int_1^Y (\log t)^n\ dt = (-1)^n n! Y \sum_{m=0}^n \frac{(-1)^m (\log Y)^m}{m!} + O(1),
\end{equation}
so using \eqref{usefulintegral} with $n=\mu+\nu+1-j$ we have that  
\begin{multline*}
   \int_1^Y \left( \sum_{n\le t} A_n^{(\mu,k)} \right) \frac{(\log t)^{\nu-k-1}}{t} \ dt \\
   = \sum_{j=0}^{\mu+k+2}\frac{c^{(\mu,k)}_{j}}{(\mu+k+2-j)!}(-1)^{\mu+\nu+1-j}(\mu+\nu+1-j)!Y\sum_{m=0}^{\mu+\nu+1-j}\frac{(-1)^m(\log Y)^m}{m!}\\
    +O\left(Ye^{-C\sqrt{\log Y}}\right).
\end{multline*}
Next swap the order of summation in the double sum to give
\begin{multline}\label{eq:partial3}
   Y \sum_{m=0}^{\nu-k-1}  \frac{(-1)^m(\log Y)^m}{m!}  \sum_{j=0}^{\mu+k+2}  \frac{c^{(\mu,k)}_{j}}{(\mu+k+2-j)!}(-1)^{\mu+\nu+1-j}(\mu+\nu+1-j)!\\
   +Y \sum_{m=\nu-k}^{\mu+\nu+1}  \frac{(-1)^m(\log Y)^m}{m!} \sum_{j=0}^{\mu+\nu+1-m}  \frac{c^{(\mu,k)}_{j}}{(\mu+k+2-j)!}(-1)^{\mu+\nu+1-j}(\mu+\nu+1-j)!
\end{multline}

Combining the pieces \eqref{eq:partial1}, \eqref{eq:partial2}, and \eqref{eq:partial3} gives
\begin{multline}\label{partial_sum_A}
\sum_{n\le Y}A_n^{(\mu,k)}(\log n)^{\nu-k}= Y\sum_{m=\nu-k}^{\mu+\nu+2} \frac{c^{(\mu,k)}_{\mu+\nu+2-m}}{(m+k-\nu)!}(\log Y)^{m} \\
+(\nu-k) Y \sum_{m=0}^{\mu+\nu+1}  \frac{(-1)^m(\log Y)^m}{m!}  \sum_{j=0}^{\min\{\mu+\nu+1-m \, , \, \mu+k+2\}} (-1)^{\mu+\nu-j} \frac{(\mu+\nu+1-j)!}{(\mu+k+2-j)!} c^{(\mu,k)}_{j}\\
+O\left(Ye^{-C\sqrt{\log Y}}\right)
\end{multline}
for some $C>0$, as required.
\end{proof}

Finally, since $I_1(\mu,\nu)$ is equal to 
\begin{equation*}
    \sum_{k=0}^{\nu} (-1)^{\nu} \binom{\nu}{k} \sum_{n\le\frac{T}{2\pi}}A_n^{(\mu,k)}(\log n)^{\nu-k}+O\left(T^{\frac{1}{2}+\varepsilon}\right),
\end{equation*}
we have by Lemma \ref{lemma_messy_binomial} that
\begin{equation}\label{I1poly_simplified}
    I_1(\mu,\nu)=\frac{T}{2\pi} \sum_{m=0}^{\mu+\nu+2} \sum_{k=0}^{\nu} (-1)^{\nu} \binom{\nu}{k}  C_1^{(\mu,\nu)}(m,k) \left(\log\frac{T}{2\pi}\right)^m+O\left(Te^{-C\sqrt{\log T}}\right)
\end{equation}
with the $C_1^{\mu,\nu}(m,k)$ given in the statement of Lemma~\ref{lemma_messy_binomial}.

\section{The left-hand vertical segment}\label{sect:left_vertical}
The evaluation of the left-hand segment $I_3(\mu,\nu)$ follows closely that that of $I_1(\mu,\nu)$, so in this section we give the overview and highlight where the differences between the two calculations lie.

\subsection{Initial manipulations} \hfill

We start by writing $I_3(\mu,\nu)$, given in \eqref{eq:defI3}, in a form which relates $I_3(\mu,\nu)$ to $I_1(\mu,\nu)$. We have 
\begin{equation*}
    I_3(\mu,\nu) = -\frac{1}{2\pi}\int_1^T\frac{\zeta'}{\zeta}(1-c+it)\zeta^{(\mu)}(1-c+it)\zeta^{(\nu)}(c-it)\ dt.
\end{equation*}

By \cite[Lemma 6]{Gonek1984} we have (where $t\ne 0$)
\begin{equation}\label{logfe2}
    \frac{\zeta'}{\zeta}(1-s)=-\log\left(\frac{|t|}{2\pi}\right)-\frac{\zeta'}{\zeta}(s)+O\left(\frac{1}{1+|t|}\right).
\end{equation}
so
\begin{align}
    I_3(\mu,\nu) &= \frac{1}{2\pi}\int_1^T\left(\log\left(\frac{t}{2\pi}\right)+\frac{\zeta'}{\zeta}(c-it)+O\left(\frac{1}{1+|t|}\right)\right)\zeta^{(\mu)}(1-c+it)\zeta^{(\nu)}(c-it)\ dt \notag\\
    &=\overline{J(\mu,\nu)} + \overline{I_1(\nu,\mu)}+O\left( T^{\frac{1}{2}+\varepsilon} \right), \label{eq:I_3}
\end{align}
where the overline denotes complex conjugate, and where we write for $s=c+it$
\begin{equation*}
    J(\mu,\nu)=\frac{1}{2\pi}\int_1^T\left(\log \frac{t}{2\pi}\right)\zeta^{(\mu)}(1-s)\zeta^{(\nu)}(s)\ dt.
\end{equation*}

We have already evaluated $I_1(\mu,\nu)$ (although note that $\mu$ and $\nu$ are switched here). It therefore only remains to compute $J(\mu,\nu)$. This follows a similar approach, the difference being only minor technical details (in particular, we have a logarithm term in our integrand in place of the logarithmic derivative of zeta in $I_1(\mu,\nu)$).

\begin{remark*}
    We will evaluate $J(\mu,\nu)$ explicitly in terms of the Stieltjes coefficients to be consistent with our approach for $I_1(\mu,\nu)$. However, an alternative approach would be to adapt Ingham's approach \cite{Ing26}, which yields
    \begin{equation*}
    J(\mu,\nu) = \int_1^T \left(\log \frac{t}{2\pi} \right) \left. \frac{\partial^{\mu+\nu}}{\partial \alpha^{\mu+\nu}} \left( \zeta(1+\alpha) + \left(\frac{t}{2\pi}\right)^{-\alpha} \zeta(1-\alpha) \right) \right|_{\alpha=0}   dt  + O\left(T^{1/2+\epsilon}\right) .
    \end{equation*}
\end{remark*}

\subsection{Initial Manipulations of $J(\mu,\nu)$} \hfill

We start by using Lemma \ref{logderiv} on $\zeta^{(\mu)}(1-s)$ to write $J(\mu,\nu)$ as 
\begin{equation*}
   \frac{(-1)^{\mu}}{2\pi}\sum_{k=0}^{\mu}  \binom{\mu}{k}  \int_1^T\zeta^{(\nu)}(c+it)\zeta^{(k)}(c+it)\chi(1-c-it)\left(\log\frac{t}{2\pi}\right)^{\mu-k+1}\ dt+O\left(T^{\frac{1}{2}+\varepsilon}\right).
\end{equation*}

Since we are in the region $\Re(s)=c>1$ we can make use of the Dirichlet series representations of these terms, namely
\begin{equation*}
    \zeta^{(\nu)}(s)\zeta^{(k)}(s)=\sum_{n=1}^{\infty}\frac{B_n^{(\nu,k)}}{n^s}
\end{equation*}
where 
\begin{equation*}
    B_n^{(\nu,k)}=(-1)^{\nu+k}\sum_{n_1n_2=n}(\log n_1)^{\nu}(\log n_2)^{k}
\end{equation*}
so that
\begin{equation*}
    J(\mu,\nu)=\frac{(-1)^{\mu}}{2\pi}\sum_{k=0}^{\mu} \binom{\mu}{k}  \int_1^T\chi(1-c-it)\left(\log\frac{t}{2\pi}\right)^{\mu-k+1}\sum_{n=1}^{\infty}\frac{B_n^{(\nu,k)}}{n^{c+it}} \ dt +O\left(T^{\frac{1}{2}+\varepsilon}\right).
\end{equation*}

Next, by applying Lemma \ref{5.1.2} we have 
\begin{equation} \label{eq:JInTermsOfB}
    J(\mu,\nu)=(-1)^{\mu}\sum_{k=0}^{\mu} \binom{\mu}{k} \sum_{n\le\frac{T}{2\pi}}B_n^{(\nu,k)}(\log n)^{\mu-k+1}+O\left(T^{\frac{1}{2}+\varepsilon}\right).
\end{equation}

\subsection{Evaluating the inner sum} \hfill 

Throughout this section we  we set $Y = \frac{T}{2\pi}$ for notational convenience.

We have the following lemmas, closely resembling Lemmas \ref{lem11DMVT} and \ref{lem12DMVT}. The key difference here is that we don't have a $\zeta'/\zeta(s)$ term (or, equivalently, we don't need to perform a sum over primes), meaning we don't need to split the error term depending on whether we assume the Riemann Hypothesis or not, as we do in Lemma~\ref{lem12DMVT}.
\begin{lemma}\label{residue} 
As $Y\to\infty$ we have
\begin{equation}\label{eqn: E2}
    \sum_{n\le Y}B_n^{(\nu,k)}=\res_{s=1}\left(\zeta^{(\nu)}(s)\zeta^{(k)}(s)\frac{Y^s}{s}\right) + O\left( Y^{\frac{1}{2}+\varepsilon}\right).
\end{equation}
\end{lemma}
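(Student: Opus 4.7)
The plan is to parallel the combined arguments of Lemmas~\ref{lem11DMVT} and~\ref{lem12DMVT}, exploiting the absence of a $\zeta'/\zeta$ factor to obtain the $Y^{1/2+\varepsilon}$ error unconditionally.

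First I would apply a truncated version of Perron's formula, arguing exactly as in the proof of Lemma~\ref{lem11DMVT} but with the crude divisor bound $|B_n^{(\nu,k)}| \leq (\log n)^{\nu+k} d_2(n)$ in place of $(\log n)^{\mu+k+1} d_3(n)$, to obtain
\[
\sum_{n \le Y} B_n^{(\nu,k)} = \frac{1}{2\pi i} \int_{c-iV}^{c+iV} \zeta^{(\nu)}(s)\zeta^{(k)}(s) \frac{Y^s}{s}\, ds + O\!\left(\frac{Y}{V}(\log Y)^{\nu+k+2}\right).
\]
Next, because $\zeta^{(\nu)}(s)\zeta^{(k)}(s)/s$ is meromorphic with its only pole at $s=1$, one can shift the contour leftwards to $c' = \tfrac{1}{2} + 1/\log V$ without needing any zero-free region for $\zeta$. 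Cauchy's theorem then supplies the residue appearing on the right-hand side of~\eqref{eqn: E2} together with contributions from two horizontal segments at heights $\pm V$ and a vertical segment at $\sigma = c'$. The horizontal pieces are handled by the convexity estimate~\eqref{eq:convexity} just as in Lemma~\ref{lem12DMVT}, contributing $O(Y^\varepsilon)$ once we eventually take $V = Y$.

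The main obstacle is the vertical segment, where a pointwise application of convexity alone would yield the excessive bound $Y^{1/2} V^{1/2+\varepsilon}$. To reach $Y^{1/2+\varepsilon}$ unconditionally I would invoke the standard mean-square estimate
\[
\int_0^V |\zeta^{(\nu)}(c' + it)|^2\, dt \ll V(\log V)^{2\nu+1},
\]
valid on and immediately to the right of the critical line, which follows from Ingham's mean-square result for $\zeta$ together with Cauchy's integral formula for derivatives. Combining it with a dyadic decomposition of $\int_1^V dt/t$ and Cauchy--Schwarz applied to the product $\zeta^{(\nu)}\zeta^{(k)}$ then yields
\[
\int_1^V \frac{|\zeta^{(\nu)}(c'+it)\zeta^{(k)}(c'+it)|}{t}\, dt \ll (\log V)^{\nu+k+2},
\]
and after handling the bounded region $|t| \le 1$ separately using the Laurent expansion at $s=1$, the vertical contribution is at most $Y^{c'}(\log V)^{\nu+k+2} \ll Y^{1/2+\varepsilon}$. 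Choosing $V = Y$ then simultaneously balances the Perron, horizontal, and vertical errors and produces the claimed estimate.
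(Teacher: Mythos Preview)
Your proposal is correct and follows essentially the same route as the paper: truncated Perron, contour shift past $s=1$, convexity on the horizontals, and Cauchy--Schwarz combined with Ingham's mean-square for derivatives on the vertical. The only cosmetic differences are that the paper shifts all the way to $c'=\tfrac12$ (rather than $\tfrac12+1/\log V$) and allows any $V$ with $\sqrt{Y}\le V\le Y$ rather than fixing $V=Y$.
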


\begin{proof}
As in Lemma~\ref{lem11DMVT} we have for  $2\le V\le Y$, as $Y\to\infty$ we have 
\begin{equation*}
\sum_{n\le Y}B_n^{(\nu,k)}=\frac{1}{2\pi i}\int_{c-iV}^{c+iV}\zeta^{(\nu)}(s)\zeta^{(k)}(s)\frac{Y^s}{s}ds+O \left( \frac{Y}{V}(\log Y)^{\nu+k+2} \right).
\end{equation*}

Similar to Lemma~\ref{lem12DMVT}, we shift the contour to the vertical line $c'=1/2$, noting that the integrand has no singularities other than at $s=1$. On the two horizontal pieces $\sigma \pm iV$ we use the unconditional convexity bounds for $\zeta^{(\mu)}(\sigma+iV)$ given in \eqref{eq:convexity}, giving an error of size
\[
\int_{1/2}^1 V^{(1-\sigma)} (\log V)^{\mu+\nu+2} \frac{Y^\sigma}{V} d\sigma \ll \frac{Y}{V} V^\epsilon .
\]
The vertical contour on the line $c'=1/2$ is bounded by
\begin{multline*}
\ll \int_1^V \left|\zeta^{(\mu)}(\tfrac12+it)\right| \left|\zeta^{(\mu)}(\tfrac12+it)\right| \frac{Y^{1/2}}{t} dt \\
\ll Y^{1/2} \left( \int_1^V \frac{\left|\zeta^{(\mu)}(\tfrac12+it)\right|^2 }{t} \right)^{1/2} \left( \int_1^V \frac{\left|\zeta^{(\nu)}(\tfrac12+it)\right|^2 }{t} \right)^{1/2} 
\ll Y^{1/2} V^{\epsilon}
\end{multline*}
the last inequality following from Ingham's \cite{Ing26} evaluation of the second moment of derivatives of zeta on the critical line. Picking any $V$ satisfying $\sqrt{Y}\leq V \leq Y$ produces the desired error, and proves the lemma.
\end{proof}

Evaluating the residue in \eqref{eqn: E2} can be done in a similar way to Lemma \ref{bccoeffs}.

\begin{lemma}\label{lem:dcoeffs}
    We have \begin{equation*}
        \res_{s=1}\left(\zeta^{(\nu)}(s)\zeta^{(k)}(s)\frac{Y^s}{s}\right)=Y\sum_{j=0}^{\nu+k+1}\frac{d_{j}^{(\nu,k)} }{(\nu+k+1-j)!} (\log Y)^{\nu+k+1-j}
    \end{equation*}
    where $d_{j}^{(\nu,k)}$ are the Laurent series coefficients around $s=1$ of 
    \begin{equation*}
        \zeta^{(\nu)}(s)\zeta^{(k)}(s)\frac{1}{s}=\sum_{j\ge 0}d_{j}^{(\nu,k)}(s-1)^{-\nu-k-2+j}.
    \end{equation*}
\end{lemma}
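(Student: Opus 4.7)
The plan is to mirror the proof of Lemma~\ref{bccoeffs} almost verbatim, since the only structural difference is the absence of the logarithmic derivative factor $\zeta'/\zeta(s)$. The computation reduces to identifying the order of the pole of the meromorphic integrand at $s=1$ and then extracting the coefficient of $(s-1)^{-1}$ in the Laurent expansion after multiplying by the Taylor series of $Y^s$.

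First, I would note that, since $\zeta(s)$ has a simple pole at $s=1$, differentiating its Laurent expansion term-by-term shows that $\zeta^{(\nu)}(s)$ has a pole of order $\nu+1$ at $s=1$, and similarly $\zeta^{(k)}(s)$ has a pole of order $k+1$. Because $1/s$ is analytic and nonvanishing at $s=1$, the product $\zeta^{(\nu)}(s)\zeta^{(k)}(s)/s$ has a pole of order exactly $\nu+k+2$ at $s=1$. This justifies the form of the Laurent expansion that defines the coefficients $d_j^{(\nu,k)}$ in the statement of the lemma.

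Next, I would expand $Y^s$ as a Taylor series about $s=1$,
\[
Y^s = Y \sum_{m=0}^{\infty} \frac{(\log Y)^m}{m!} (s-1)^m,
\]
multiply it against the Laurent expansion of $\zeta^{(\nu)}(s)\zeta^{(k)}(s)/s$, and collect powers of $(s-1)$ to obtain
\[
\zeta^{(\nu)}(s)\zeta^{(k)}(s)\frac{Y^s}{s} = Y \sum_{j \ge 0}\sum_{m \ge 0} d_j^{(\nu,k)} \frac{(\log Y)^m}{m!} (s-1)^{j+m-\nu-k-2}.
\]
The residue at $s=1$ is the coefficient of $(s-1)^{-1}$, which requires $j+m = \nu+k+1$. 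Restricting to $0 \le j \le \nu+k+1$ and setting $m = \nu+k+1-j$ yields
\[
\res_{s=1}\left(\zeta^{(\nu)}(s)\zeta^{(k)}(s)\frac{Y^s}{s}\right) = Y \sum_{j=0}^{\nu+k+1} \frac{d_j^{(\nu,k)}}{(\nu+k+1-j)!} (\log Y)^{\nu+k+1-j},
\]
as claimed.

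I do not anticipate any genuine obstacle: this is essentially a bookkeeping exercise in multiplying a Laurent series by a Taylor series, entirely parallel to the proof of Lemma~\ref{bccoeffs}, with the order of the pole decreased by one because the factor $\zeta'/\zeta(s)$ (itself contributing a simple pole at $s=1$) has been removed from the integrand.
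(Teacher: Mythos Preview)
Your proof is correct and follows exactly the approach the paper intends: the paper does not give a separate argument for this lemma but simply notes that it is done ``in a similar way to Lemma~\ref{bccoeffs}'', and your write-up is precisely that parallel computation with the pole order reduced by one. There is nothing to add.
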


By combining Lemma~\ref{residue} and \ref{lem:dcoeffs} we have \begin{equation*}
    \sum_{n\le Y}B_n^{(\nu,k)}=Y\sum_{j=0}^{\nu+k+1}\frac{d_{j}^{(\nu,k)} }{(\nu+k+1-j)!}(\log Y)^{\nu+k+1-j}+ O\left( Y^{\frac{1}{2}+\varepsilon}\right).
\end{equation*}

The next lemma is analogous to what was proved in  Lemma \ref{lemma_messy_binomial}, where we reinsert the logarithm in the sum above.

\begin{lemma}\label{messy_binomial_2}
    For $k=0,1,\dots,\mu$ we have that
    \begin{equation*}
        \sum_{n\le Y}B_n^{(\nu,k)}(\log n)^{\mu-k+1} = Y\sum_{m=0}^{\mu+\nu+2}C_2^{(\mu,\nu)}(m,k)(\log Y)^m + O\left( Y^{\frac{1}{2}+\varepsilon}\right),    
\end{equation*}
where for $m \geq \mu-k+1$,
\begin{multline*}
    C_2^{(\mu,\nu)}(m,k)=  \frac{(-1)^m(\mu-k+1)}{m!}\sum_{j=0}^{\mu+\nu+1-m} (-1)^{\mu+\nu-j} \frac{(\mu+\nu+1-j)!}{(\nu+k+1-j)!} d_{j}^{(\nu,k)} \\
    +\frac{d_{\mu+\nu+2-m}^{(\nu,k)}}{(k+m-\mu-1)!}
\end{multline*}
and for $m \leq \mu-k$,
\begin{equation*}
    C_2^{(\mu,\nu)}(m,k)=  \frac{(-1)^m(\mu-k+1)}{m!}\sum_{j=0}^{\nu+k+1} (-1)^{\mu+\nu-j} \frac{(\mu+\nu+1-j)!}{(\nu+k+1-j)!} d_{j}^{(\nu,k)}
\end{equation*}
\end{lemma}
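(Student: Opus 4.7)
The plan is to mirror the argument of Lemma \ref{lemma_messy_binomial} essentially verbatim, replacing the coefficients $c^{(\mu,k)}_j$ by $d^{(\nu,k)}_j$ and the outer exponent $\nu-k$ by $\mu-k+1$, while tracking the sign and index changes carefully. From Lemmas \ref{residue} and \ref{lem:dcoeffs} we already have the asymptotic
\begin{equation*}
    \sum_{n\le Y}B_n^{(\nu,k)}=Y\sum_{j=0}^{\nu+k+1}\frac{d_{j}^{(\nu,k)}}{(\nu+k+1-j)!}(\log Y)^{\nu+k+1-j} + O\left(Y^{\frac{1}{2}+\varepsilon}\right),
\end{equation*}
which is precisely the analogue of the input to the partial summation argument in Lemma \ref{lemma_messy_binomial}.

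The first step is Abel summation:
\begin{equation*}
    \sum_{n\le Y}B_n^{(\nu,k)}(\log n)^{\mu-k+1} = \left(\sum_{n\le Y}B_n^{(\nu,k)}\right)(\log Y)^{\mu-k+1} - (\mu-k+1)\int_1^Y \left(\sum_{n\le t}B_n^{(\nu,k)}\right)\frac{(\log t)^{\mu-k}}{t}\,dt.
\end{equation*}
For the boundary term, substituting the asymptotic and relabelling the summation index via $m=\mu+\nu+2-j$ (which ranges from $\mu-k+1$ to $\mu+\nu+2$) yields the contribution
\begin{equation*}
    Y\sum_{m=\mu-k+1}^{\mu+\nu+2} \frac{d_{\mu+\nu+2-m}^{(\nu,k)}}{(k+m-\mu-1)!}(\log Y)^{m} + O\left(Y^{\frac{1}{2}+\varepsilon}\right),
\end{equation*}
which is exactly the extra term appearing in the $m \ge \mu-k+1$ case of $C_2^{(\mu,\nu)}(m,k)$.

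For the integral term, I insert the asymptotic, exchange the integral and the finite sum over $j$, and apply the integration-by-parts identity \eqref{usefulintegral} with $n = \mu+\nu+1-j$. This produces a double sum over $j \in \{0,\dots,\nu+k+1\}$ and $m \in \{0,\dots,\mu+\nu+1-j\}$, which I then swap so that the outer sum is over $m$. The constraints $j \le \nu+k+1$ and $j \le \mu+\nu+1-m$ combine to give the upper bound $\min\{\mu+\nu+1-m,\nu+k+1\}$; this splits into the two cases $m \le \mu-k$ (upper limit $\nu+k+1$) and $m \ge \mu-k+1$ (upper limit $\mu+\nu+1-m$). A single overall sign is picked up from multiplying the Abel minus sign by the $(-1)^{\mu+\nu+1-j}$ coming from \eqref{usefulintegral}, producing the factor $(-1)^{\mu+\nu-j}$ in the final answer. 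Combining with the boundary term according to these two ranges of $m$ yields precisely the stated formula for $C_2^{(\mu,\nu)}(m,k)$.

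The error is controlled routinely: the $O(t^{1/2+\varepsilon})$ contribution from the asymptotic integrates against $(\log t)^{\mu-k}/t$ to give $O(Y^{1/2+\varepsilon})$, and the boundary-term error is $O(Y^{1/2+\varepsilon}(\log Y)^{\mu-k+1}) = O(Y^{1/2+\varepsilon})$ after adjusting $\varepsilon$. The only genuinely delicate step is the bookkeeping of the two summation ranges at the swap: one must be careful that the case boundary $m = \mu-k$ versus $m = \mu-k+1$ correctly absorbs the boundary contribution $d^{(\nu,k)}_{\mu+\nu+2-m}/(k+m-\mu-1)!$, with no double counting and no missing terms. Once this combinatorial matching is verified, the lemma follows.
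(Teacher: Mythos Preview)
Your proposal is correct and follows essentially the same approach as the paper's proof: partial summation, evaluation of the boundary term via the relabelling $m=\mu+\nu+2-j$, evaluation of the integral via \eqref{usefulintegral}, and the swap of the double sum with the resulting case split at $m=\mu-k$. The error handling and the identification of the sign $(-1)^{\mu+\nu-j}$ also match.
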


\begin{remark*}
In the case when $m=\mu+\nu+2$ (which is the largest $m$ can be), the sum in $C_2^{(\mu,\nu)}(m,k)$ is empty.
\end{remark*}

\begin{proof}
As with the right hand vertical segment in Lemma~\ref{lemma_messy_binomial}, we use partial summation to reinsert the logarithmic term. 
By the partial summation formula we have \begin{multline*}
    \sum_{n\le Y}B_n^{(\nu,k)}(\log n)^{\mu-k+1}=\left(\sum_{n\le Y}B_n^{(\nu,k)}\right)(\log Y)^{\mu-k+1}\\-(\mu-k+1)\int_1^Y\left(\sum_{n\le t}B_n^{(\nu,k)}\right)\frac{(\log t)^{\mu-k}}{t}\ dt.
\end{multline*}   
    
  The first term is
\begin{align*}
\left(\sum_{n\le Y}B_n^{(\nu,k)}\right)(\log Y)^{\mu-k+1} &= Y \sum_{j=0}^{\nu+k+1} \frac{d_{j}^{(\nu,k)}}{(\nu+k+1-j)!} (\log Y)^{\mu+\nu+2-j} + O\left( Y^{\frac{1}{2}+\varepsilon}\right) \\
&= Y \sum_{m=\mu-k+1}^{\mu+\nu+2} \frac{d_{\mu+\nu+2-m}^{(\nu,k)}}{(m+k-\mu-1)!} (\log Y)^{m} + O\left( Y^{\frac{1}{2}+\varepsilon}\right)
\end{align*}
and the second is equal to

\begin{equation*}
-(\mu-k+1)\sum_{j=0}^{\nu+k+1}\frac{d_{j}^{(\nu,k)}}{(\nu+k+1-j)!} \int_1^Y (\log t)^{\mu+\nu+1-j} dt  +  O\left( Y^{\frac{1}{2}+\varepsilon}\right)
\end{equation*}
and using \eqref{usefulintegral}, this evaluates to
\begin{equation*}
    -(\mu-k+1)\sum_{j=0}^{\nu+k+1}\frac{d_{j}^{(\nu,k)}}{(\nu+k+1-j)!}(-1)^{\mu+\nu+1-j}(\mu+\nu+1-j)!Y\sum_{m=0}^{\mu+\nu+1-j}\frac{(-1)^{m}(\log Y)^{m}}{m!}
\end{equation*}
Swapping the order of summation, the double sum is
\begin{multline*}
(\mu-k+1) Y \sum_{m=0}^{\mu-k} \frac{(-1)^{m}(\log Y)^{m}}{m!}   \sum_{j=0}^{\nu+k+1} (-1)^{\mu+\nu-j} \frac{(\mu+\nu+1-j)!}{(\nu+k+1-j)!} d_{j}^{(\nu,k)}  \\
+(\mu-k+1) Y \sum_{m=\mu-k+1}^{\mu+\nu+1} \frac{(-1)^{m}(\log Y)^{m}}{m!}   \sum_{j=0}^{\mu+\nu+1-m} (-1)^{\mu+\nu-j} \frac{(\mu+\nu+1-j)!}{(\nu+k+1-j)!} d_{j}^{(\nu,k)}.
\end{multline*}
Combining the two pieces gives the formula for $C_2^{(\mu,\nu)}(m,k)$ and completes the proof.
\end{proof}

Applying Lemma \ref{messy_binomial_2} to \eqref{eq:JInTermsOfB} we have
\begin{equation}\label{J1poly_simplified}
    J(\mu,\nu)=\frac{T}{2\pi} \sum_{m=0}^{\mu+\nu+2} \sum_{k=0}^{\mu} (-1)^{\mu} \binom{\mu}{k}   C_2^{(\mu,\nu)}(m,k) \left(\log\frac{T}{2\pi}\right)^m + O\left( T^{\frac{1}{2}+\varepsilon}\right).
\end{equation}

Switching the role of $\mu$ and $\nu$ in \eqref{I1poly_simplified}, we have that
\begin{equation}\label{I1_reversepoly_simplified}
    I_1(\nu,\mu)=\frac{T}{2\pi} \sum_{m=0}^{\mu+\nu+2} \sum_{k=0}^{\mu} (-1)^{\mu} \binom{\mu}{k} C_1^{(\nu,\mu)}(m,k) \left(\log\frac{T}{2\pi}\right)^m + O\left(Te^{-C\sqrt{\log T}}\right)
\end{equation}
so combining $\overline{J(\mu,\nu)}$ and $\overline{I_1(\nu,\mu)}$ in \eqref{eq:I_3} gives
\begin{multline}\label{Jpoly}
    I_3(\mu,\nu)=\frac{T}{2\pi} \sum_{m=0}^{\mu+\nu+2} \sum_{k=0}^{\mu} (-1)^{\mu} \binom{\mu}{k} \left(C_1^{(\nu,\mu)}(m,k)+C_2^{(\mu,\nu)}(m,k)\right) \left(\log\frac{T}{2\pi}\right)^m\\
    +O\left(Te^{-C\sqrt{\log T}}\right)
\end{multline}

Substituting \eqref{I1poly_simplified} and \eqref{Jpoly} into \eqref{eq:I_I1_I3}, we deduce that
\begin{multline*}
    I(\mu,\nu)=\frac{T}{2\pi} \sum_{m=0}^{\mu+\nu+2} \sum_{k=0}^{\nu} (-1)^{\nu} \binom{\nu}{k} C_1^{(\mu,\nu)}(m,k) \left(\log\frac{T}{2\pi}\right)^m\\ 
    +\frac{T}{2\pi} \sum_{m=0}^{\mu+\nu+2} \sum_{k=0}^{\mu} (-1)^{\mu} \binom{\mu}{k}  \left(C_1^{(\nu,\mu)}(m,k)+C_2^{(\mu,\nu)}(m,k)\right) \left(\log\frac{T}{2\pi}\right)^m + O\left(Te^{-C\sqrt{\log T}}\right),
\end{multline*}
completing the proof of Theorem \ref{mainthm}.

\section{Proof of the Corollaries}\label{conclusion}

As already noted, Corollary \ref{cor:higher_derivs_milinovich} follows immediately from Theorem \ref{mainthm}.

\begin{proof}[Proof of Corollary \ref{cor:micah}]

We show that our theorem with $\mu=\nu=1$ recovers precisely Milinovich's full asymptotic expansion for the first derivative, stated in \eqref{milinovich_theorem}.

Theorem~\ref{mainthm} states that, under the Riemann Hypothesis,
\begin{multline*}
\sum_{0<\gamma\le T}\left|\zeta'\left(\frac{1}{2}+i\gamma\right)\right|^2 \\
= \frac{T}{2\pi} \sum_{m=0}^4 (-2C_1^{(1,1)}(m,0) -C_2^{(1,1)}(m,0) -  2C_1^{(1,1)}(m,1) - C_2^{(1,1)}(m,1)) \left(\log\frac{T}{2\pi}\right)^m \\
+ O\left(T^{\frac{1}{2}+\varepsilon}\right).
\end{multline*}

We now calculate the various terms in this expansion. We have
\[
C_1^{(1,1)}(m,0) = 
\begin{cases}
     c^{(1,0)}_0-c^{(1,0)}_1+c^{(1,0)}_2-c^{(1,0)}_3 & \text{ if } m=0 \\[1.5ex]
    -c^{(1,0)}_0+c^{(1,0)}_1-c^{(1,0)}_2+c^{(1,0)}_3 & \text{ if } m=1 \\[1.5ex]
   \frac{1}{2} c^{(1,0)}_0- \frac{1}{2}  c^{(1,0)}_1 +c^{(1,0)}_2 & \text{ if } m=2 \\[1.5ex]
    -\frac{1}{6} c^{(1,0)}_0 +\frac{1}{2} c^{(1,0)}_1 & \text{ if } m=3\\[1.5ex]
    \frac{1}{6} c^{(1,0)}_0 & \text{ if } m=4
\end{cases}
\]
where $c^{(1,0)}_j$ are the coefficients of
\[
\frac{\zeta'(s)^2}{s} = \frac{c^{(1,0)}_0}{(s-1)^4} + \frac{c^{(1,0)}_1}{(s-1)^3} + \frac{c^{(1,0)}_2}{(s-1)^2} + \frac{c^{(1,0)}_3}{(s-1)} + \dots
\]
and where
\[
C_2^{(1,1)}(m,0) = 
\begin{cases}
6 d^{(1,0)}_0 -4 d^{(1,0)}_1 + 2d^{(1,0)}_2 & \text{ if } m=0 \\[1.5ex]
-6 d^{(1,0)}_0 + 4d^{(1,0)}_1 - 2d^{(1,0)}_2 & \text{ if } m=1 \\[1.5ex]
3 d^{(1,0)}_0 -2d^{(1,0)}_1 +d^{(1,0)}_2& \text{ if } m=2 \\[1.5ex]
-d^{(1,0)}_0 + d^{(1,0)}_1& \text{ if } m=3\\[1.5ex]
\frac{1}{2} d^{(1,0)}_0 & \text{ if } m=4
\end{cases}
\]
where $d^{(1,0)}_j$ are the coefficients of
\[
\frac{\zeta'(s) \zeta(s)}{s} = \frac{d^{(1,0)}_0}{(s-1)^3} + \frac{d^{(1,0)}_1}{(s-1)^2} + \frac{d^{(1,0)}_2}{(s-1)} + \dots
\]
and where
\[
C_1^{(1,1)}(m,1) = 
\begin{cases}
c^{(1,1)}_4 & \text{ if } m=0 \\[1.5ex]
c^{(1,1)}_3 & \text{ if } m=1 \\[1.5ex]
\frac{1}{2} c^{(1,1)}_2 & \text{ if } m=2 \\[1.5ex]
\frac{1}{6} c^{(1,1)}_1 & \text{ if } m=3\\[1.5ex]
\frac{1}{24} c^{(1,1)}_0 & \text{ if } m=4
\end{cases}
\]
where $c^{(1,1)}_j$ are the coefficients of
\[
\frac{\zeta'(s)^3}{\zeta(s)} \frac{1}{ s} = \frac{c^{(1,1)}_0}{(s-1)^5} + \frac{c^{(1,1)}_1}{(s-1)^4} + \frac{c^{(1,1)}_2}{(s-1)^3} + \frac{c^{(1,1)}_3}{(s-1)^2} + \frac{c^{(1,1)}_4}{(s-1)} + \dots
\]
and where
\[
C_2^{(1,1)}(m,1) = 
\begin{cases}
d^{(1,1)}_0 -d^{(1,1)}_1 +d^{(1,1)}_2 -d^{(1,1)}_3  & \text{ if } m=0 \\[1.5ex]
-d^{(1,1)}_0 +d^{(1,1)}_1 -d^{(1,1)}_2 +d^{(1,1)}_3  & \text{ if } m=1 \\[1.5ex]
\frac{1}{2} d^{(1,1)}_0 -\frac{1}{2} d^{(1,1)}_1 +d^{(1,1)}_2 & \text{ if } m=2 \\[1.5ex]
 -\frac{1}{6} d^{(1,1)}_0 +\frac{1}{2} d^{(1,1)}_1 & \text{ if } m=3 \\[1.5ex]
\frac{1}{6} d^{(1,1)}_0 & \text{ if } m=4 \\[1.5ex]
\end{cases}
\]
where $d^{(1,1)}_j$ are the coefficients of
\[
\frac{\zeta'(s)^2}{s} = \frac{d^{(1,1)}_0}{(s-1)^4} + \frac{d^{(1,1)}_1}{(s-1)^3} + \frac{d^{(1,1)}_2}{(s-1)^2} + \frac{d^{(1,1)}_3}{(s-1)} + \dots
\]
which we note are the same as $c^{(1,0)}_j$ in this special case.

Evaluating the Laurent coefficients $c^{(1,k)}_j$ and $d^{(1,k)}_j$ and inserting them into $C_1(m,k)$ and $C_2(m,k)$ yields \eqref{milinovich_theorem}.
\end{proof}

\begin{proof}[Proof of Corollary \ref{corollary:general}]

By explicitly evaluating the highest-order term in the polynomial in Theorem~\ref{mainthm} we can recover the leading order coefficient, first found by Gonek.

    By Theorem \ref{mainthm} we have that the leading order is given by taking the $m=\mu+\nu+2$ term in \eqref{eq:poly}, that is,
    \begin{multline}\label{eq:leading}
        \sum_{k=0}^{\nu}(-1)^{\nu}\binom{\nu}{k}C_1^{(\mu,\nu)}(\mu+\nu+2,k)\\+\sum_{k=0}^{\mu}(-1)^{\mu}\binom{\mu}{k}\left(C_1^{(\nu,\mu)}(\mu+\nu+2,k)+C_2^{(\mu,\nu)}(\mu+\nu+2,k)\right).
    \end{multline}

    At the very top power , the sum over $j$ pieces in $C_1, C_2$ are empty and so only the first terms remain,
    \begin{align*}
        C_1^{(\mu,\nu)}(\mu+\nu+2,k)&=\frac{c_0^{(\mu,k)}}{(k+\mu+2)!}=\frac{(-1)^{\mu+k+1}\mu!k!}{(k+\mu+2)!}\\ 
        C_1^{(\nu,\mu)}(\mu+\nu+2,k)&=\frac{c_0^{(\nu,k)}}{(k+\nu+2)!}=\frac{(-1)^{\nu+k+1}\nu!k!}{(k+\nu+2)!}\\
        C_2^{(\mu,\nu)}(\mu+\nu+2,k)&=\frac{d_0^{(\nu,k)}}{(k+\nu+1)!}=\frac{(-1)^{\nu+k}\nu!k!}{(k+\nu+1)!}.
    \end{align*}

Substituting these values into \eqref{eq:leading} gives
    \begin{equation*}
        \sum_{k=0}^{\nu}(-1)^{\nu}\binom{\nu}{k}\frac{(-1)^{\mu+k+1}\mu!k!}{(k+\mu+2)!}+\sum_{k=0}^{\mu}(-1)^{\mu}\binom{\mu}{k}\left(\frac{(-1)^{\nu+k+1}\nu!k!}{(k+\nu+2)!}+\frac{(-1)^{\nu+k}\nu!k!}{(k+\nu+1)!}\right)
    \end{equation*}
and evaluating these sums completes the proof of Corollary \ref{corollary:general}.
\end{proof}

\appendix

\section{The second derivative}\label{app:2ndDeriv}

Employing Mathematica, we evaluated the full asymptotic polynomial for the second derivative. This demonstrates the complexity when writing the result out in full.

We have
\[
\sum_{0<\gamma\le T}\left|\zeta''\left(\frac{1}{2}+i\gamma\right)\right|^2 = \frac{T}{2\pi} \sum_{m=0}^6 A_j \left( \log\frac{T}{2\pi}\right)^j
\]
where the coefficients $A_j$ are given in Table~\ref{tab:SecondDeriv}.
\begin{table}[hp]
    \begin{tabular}{|l|p{0.8\textwidth}|}
    \hline
    &\\
    $A_6$ & $\frac{4}{45}$ \\[2ex]
    $A_5$ & $-\frac{8}{15} + \frac{11 \gamma_0 }{15}$ \\[2ex]
    $A_4$ & $\frac{8}{3}-\frac{11 \gamma_0 }{3}+\gamma_0 ^2 -\frac{8 \gamma _1}{3}$ \\[2ex]
    $A_3$ & $-\frac{32}{3}+\frac{44 \gamma_0 }{3}-4 \gamma_0 ^2-\frac{8
   \gamma_0 ^3}{3} +  \frac{32 \gamma _1}{3}-12 \gamma_0  \gamma _1+\frac{2 \gamma
   _2}{3} $ \\[2ex]
    $A_2$ & $32-44 \gamma_0 +12 \gamma_0 ^2+8 \gamma_0
   ^3+4 \gamma_0 ^4  -32 \gamma _1+36 \gamma_0  \gamma _1+24 \gamma_0 ^2 \gamma _1+24
   \gamma _1^2-2 \gamma _2+16 \gamma_0  \gamma
   _2+\frac{8 \gamma _3}{3}$ \\[2ex]
    $A_1$ & $ -64+88 \gamma_0
   -24 \gamma_0 ^2-16 \gamma_0 ^3-8 \gamma_0 ^4  +64 \gamma _1-72 \gamma_0  \gamma _1-48 \gamma_0 ^2 \gamma _1-8 \gamma_0
   ^3 \gamma _1-48 \gamma _1^2-24 \gamma_0 
   \gamma _1^2+4 \gamma _2-32 \gamma_0  \gamma _2-12
   \gamma_0 ^2 \gamma _2-32 \gamma _1 \gamma _2 -\frac{16 \gamma
   _3}{3}-8 \gamma_0  \gamma _3-\frac{4 \gamma _4}{3}$ \\[2ex]
    $A_0$ & $64-88 \gamma_0 +24 \gamma_0 ^2+16 \gamma_0 ^3+8 \gamma_0 ^4-8
   \gamma_0 ^6 -64 \gamma _1+72 \gamma_0  \gamma _1+48 \gamma_0 ^2 \gamma _1+8
   \gamma_0 ^3 \gamma _1-48 \gamma_0 ^4 \gamma _1+48 \gamma
   _1^2+24 \gamma_0  \gamma _1^2-72 \gamma_0
   ^2 \gamma _1^2-16 \gamma _1^3
   -4 \gamma _2+32 \gamma_0  \gamma _2+12 \gamma_0 ^2 \gamma _2-16
   \gamma_0 ^3 \gamma _2+32 \gamma _1 \gamma _2-24 \gamma_0  \gamma
   _1 \gamma _2+4 \gamma _2^2+\frac{16 \gamma
   _3}{3}+8 \gamma_0  \gamma _3+8 \gamma _1 \gamma _3
   +\frac{4\gamma _4}{3}
   +2 \gamma_0  \gamma _4+\frac{14 \gamma
   _5}{15}$\\[2ex]
    \hline
    \end{tabular}
    \caption{The coefficients of the asymptotic degree-6 polynomial for the absolute value squared of the second derivative, $ \left|\zeta''\left(\frac{1}{2}+i\gamma\right)\right|^2$.}\label{tab:SecondDeriv}
\end{table}

Similarly, we have
\[
\sum_{0<\gamma\le T} \zeta'\left(\frac{1}{2}+i\gamma\right) \zeta''\left(\frac{1}{2}-i\gamma\right) = \frac{T}{2\pi} \sum_{m=0}^5 B_j \left( \log\frac{T}{2\pi}\right)^j
\]
where the coefficients $B_j$ are given in Table~\ref{tab:MixedDeriv}.
\begin{table}[hp]
    \begin{tabular}{|l|p{0.8\textwidth}|}
    \hline
    &\\
    $B_5$ & $-\frac{1}{12}$ \\[2ex]
    $B_4$ & $\frac{5}{12}-\frac{2 \gamma_0 }{3}$ \\[2ex]
    $B_3$ & $-\frac{5}{3}+\frac{8 \gamma_0 }{3}-\gamma_0^2 + 2 \gamma_1$ \\[2ex]
    $B_2$ & $5-8 \gamma_0 +3 \gamma_0^2+2 \gamma_0^3 -6 \gamma_1+10 \gamma_0  \gamma_1+\gamma_2$ \\[2ex]
    $B_1$ & $-10+16 \gamma_0 -6 \gamma_0^2-4 \gamma_0^3-2 \gamma_0^4 + 12 \gamma_1-20 \gamma_0  \gamma_1-12 \gamma_0^2 \gamma_1-14 \gamma_1^2-2 \gamma_2-8 \gamma_0  \gamma_2-\frac{10 \gamma_3}{3}$ \\[2ex]
    $B_0$ & $10-16 \gamma_0 +6 \gamma_0^2+4 \gamma_0^3+2 \gamma_0^4 -12 \gamma_1+20 \gamma_0  \gamma_1+12 \gamma_0^2 \gamma_1+14 \gamma_1^2+2 \gamma_2+8 \gamma_0  \gamma_2+\frac{10 \gamma_3}{3}$\\[2ex]
    \hline
    \end{tabular}
    \caption{The coefficients of the asymptotic degree-5 polynomial for the mixed first and second derivative, $\zeta'\left(\frac{1}{2}+i\gamma\right) \zeta''\left(\frac{1}{2}-i\gamma\right)$.}\label{tab:MixedDeriv}
\end{table}

\section{Graphical illustration}\label{app:graphs}

We illustrate the power of our result by plotting the graphs for $\mu=\nu=1$ in Figure~\ref{fig1} and $\mu=\nu=2$ in Figure~\ref{fig2}, in each figure showing the full sum
\[
\sum_{0<\gamma\le T}\left|\zeta^{(\nu)} \left(\frac{1}{2}+i\gamma\right)\right|^2 ,
\]
the sum minus the leading order asymptotic term, and the sum minus the full asymptotic as given in Theorem~\ref{mainthm}. Even though we only plot this over the first 100,000 zeros, the graphs clearly demonstrate the power of the result.

Similarly in Figure~\ref{fig3} we plot the graph for $\mu=1$ and $\nu=2$. The true sum is not real, but the imaginary parts are very small compared to the real parts, and we show 
\[
\Im \sum_{0<\gamma\le T} \zeta'\left(\frac{1}{2}+i\gamma\right) \zeta''\left(\frac{1}{2}-i\gamma\right) 
\]
as well as the error coming from subtracting the full asymptotic from the real part of the sum.

\begin{figure}[ht]
		\begin{subfigure}{.5\textwidth}
			\centering
			\includegraphics[width=.8\linewidth]{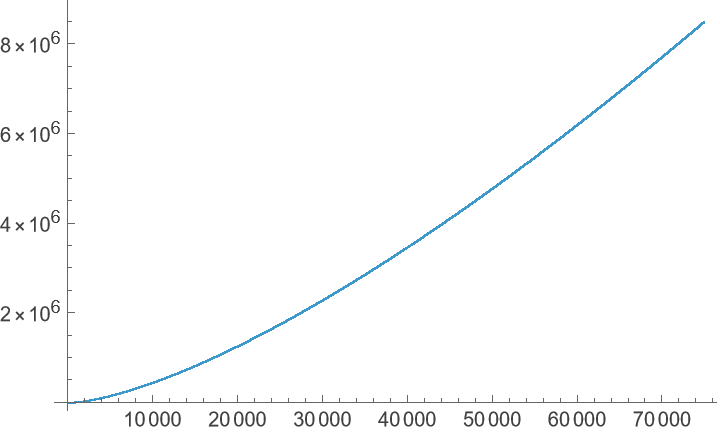}
			\caption{$\sum_{0<\gamma<T} \left|\zeta'\left(\frac{1}{2}+i\gamma\right)\right|^2$}
			\label{fig1:Truth}
		\end{subfigure}%
		\begin{subfigure}{.5\textwidth}
			\centering
			\includegraphics[width=.8\linewidth]{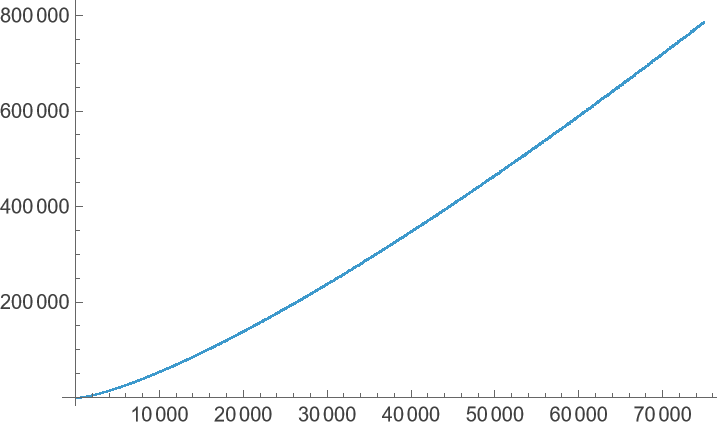}
			\caption{$\sum_{0<\gamma<T} \left|\zeta'\left(\frac{1}{2}+i\gamma\right)\right|^2 - \frac{1}{24\pi}  T \left(\log \frac{T}{2\pi}\right)^4$}
			\label{fig1:LeadingOrder}
		\end{subfigure}\\
		\begin{subfigure}{.5\textwidth}
			\centering
			\includegraphics[width=.8\linewidth]{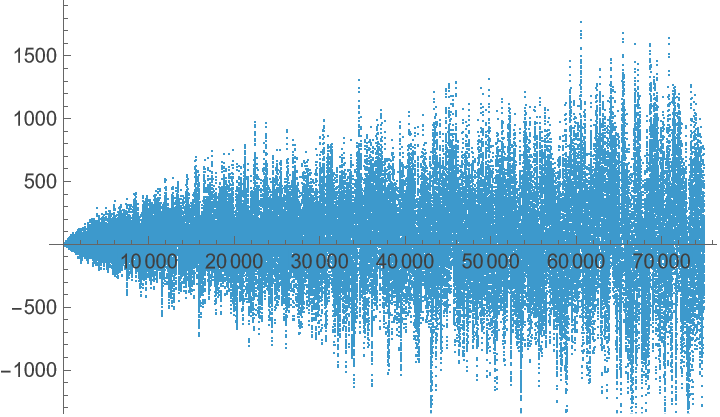}
			\caption{$\sum_{0<\gamma<T} \left|\zeta'\left(\frac{1}{2}+i\gamma\right)\right|^2$ minus the full asymptotic given in \eqref{milinovich_theorem}.}
			\label{fig1:FullAsymptotic}
		\end{subfigure}%
		\caption{The first derivative}
		\label{fig1}
	\end{figure}

\begin{figure}[ht]
		\begin{subfigure}{.5\textwidth}
			\centering
			\includegraphics[width=.8\linewidth]{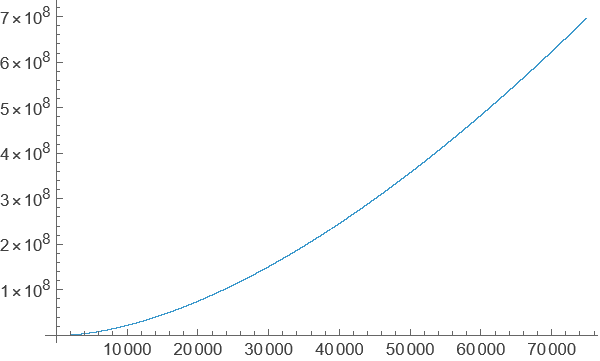}
			\caption{$\sum_{0<\gamma<T} \left|\zeta''\left(\frac{1}{2}+i\gamma\right)\right|^2$}
			\label{fig2:Truth}
		\end{subfigure}%
		\begin{subfigure}{.5\textwidth}
			\centering
			\includegraphics[width=.8\linewidth]{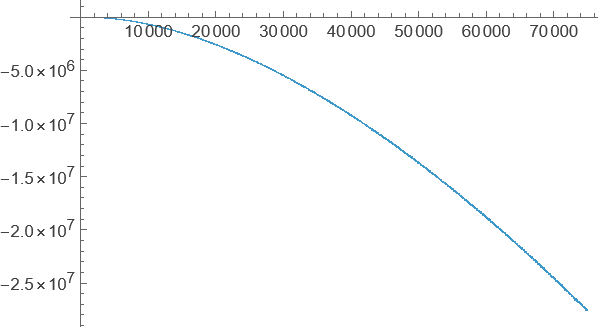}
			\caption{$\sum_{0<\gamma<T} \left|\zeta''\left(\frac{1}{2}+i\gamma\right)\right|^2 - \frac{4}{90\pi} T \left(\log \frac{T}{2\pi}\right)^6$}
			\label{fig2:LeadingOrder}
		\end{subfigure}\\
		\begin{subfigure}{.5\textwidth}
			\centering
			\includegraphics[width=.8\linewidth]{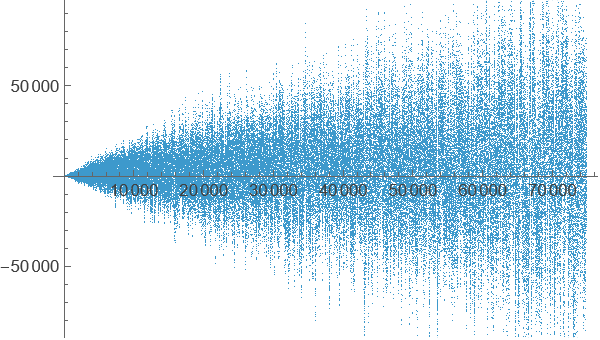}
			\caption{$\sum_{0<\gamma<T} \left|\zeta''\left(\frac{1}{2}+i\gamma\right)\right|^2$  minus the full asymptotic given in Appendix~\ref{app:2ndDeriv}}
			\label{fig2:FullAsymptotic}
		\end{subfigure}%
		\caption{The second derivative}
		\label{fig2}
	\end{figure}

\begin{figure}[ht]
		\begin{subfigure}{.5\textwidth}
			\centering
			\includegraphics[width=.8\linewidth]{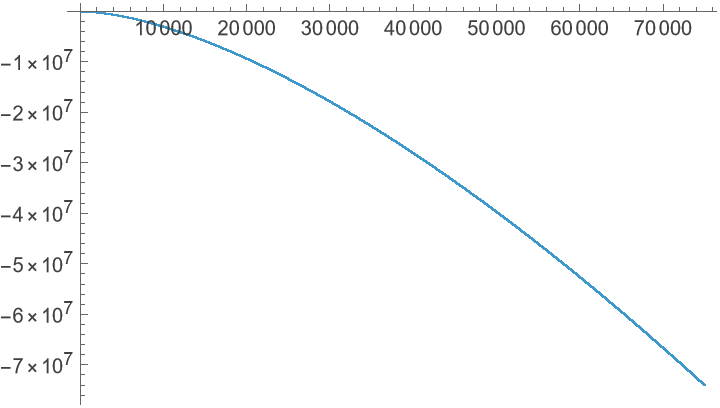}
			\caption{$\Re\sum_{0<\gamma<T}  \zeta'\left(\frac{1}{2}+i\gamma\right) \zeta''\left(\frac{1}{2}-i\gamma\right)$}
			\label{fig3:Truth}
		\end{subfigure}%
		\begin{subfigure}{.5\textwidth}
			\centering
			\includegraphics[width=.8\linewidth]{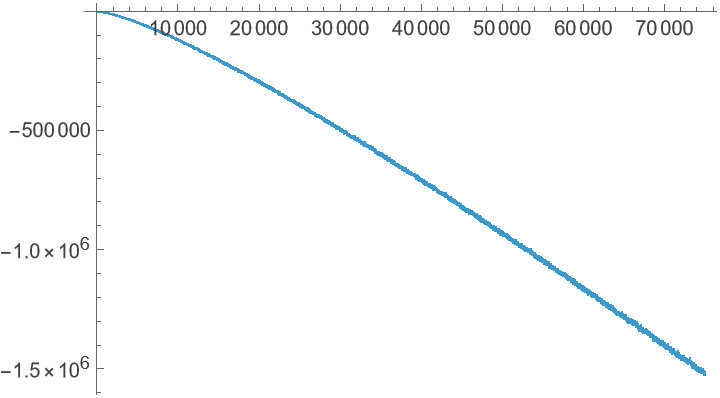}
			\caption{$\Re\sum_{0<\gamma<T}  \zeta'\left(\frac{1}{2}+i\gamma\right) \zeta''\left(\frac{1}{2}-i\gamma\right)- \frac{1}{24\pi} T \left(\log \frac{T}{2\pi}\right)^5$}
			\label{fig3:LeadingOrder}
		\end{subfigure}\\
		\begin{subfigure}{.5\textwidth}
			\centering
			\includegraphics[width=.8\linewidth]{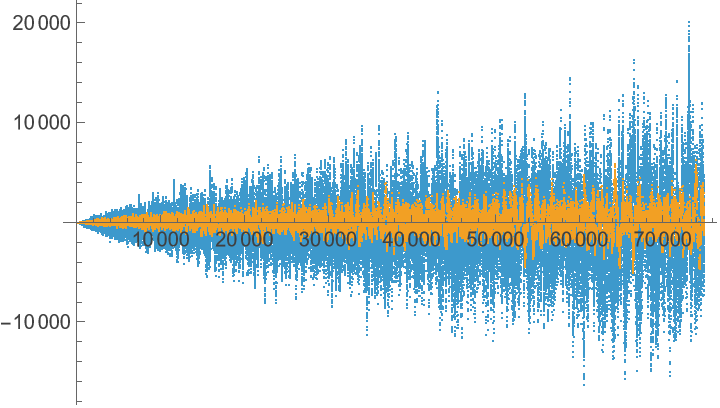}
			\caption{$\Re\sum_{0<\gamma<T}  \zeta'\left(\frac{1}{2}+i\gamma\right) \zeta''\left(\frac{1}{2}-i\gamma\right)$  minus the full asymptotic given in Appendix~\ref{app:2ndDeriv} (blue), and also $\Im\sum_{0<\gamma<T}  \zeta'\left(\frac{1}{2}+i\gamma\right) \zeta''\left(\frac{1}{2}-i\gamma\right)$ (orange).}
			\label{fig3:FullAsymptotic}
		\end{subfigure}%
		\caption{The mixed first and second derivatives}
		\label{fig3}
	\end{figure}

\newpage

\addcontentsline{toc}{chapter}{Bibliography}

\bibliography{references}    
\bibliographystyle{plain}

\end{document}